\numberwithin{equation}{section}
\newcommand{\R}{{\mathbb R}} 
\newcommand{\p}{\varphi}
\renewcommand{\(}{\left(}
\renewcommand{\)}{\right)}
\newtheorem{theorem}{Theorem}[section]
\newtheorem{lemma}[theorem]{Lemma}
\newtheorem{definition}[theorem]{Definition}
\newtheorem{remark}[theorem]{Remark}
\begin{document}











\title[Refractors in anisotropic media]{Refractors in anisotropic media\\associated with norms}
\author[C. E. Guti\'errez,  Qingbo Huang and H. Mawi]{Cristian E. Guti\'errez,  Qingbo Huang and Henok Mawi}
\begin{abstract}
We show existence of interfaces between two anisotropic materials so that light is refracted in accordance with a given pattern of energy. To do this we formulate a vector Snell law for anisotropic media when the wave fronts are given by norms for which the corresponding unit spheres are strictly convex.
\end{abstract}
\thanks{\today. \\The first author was partially supported by NSF grant DMS--1600578, and the third author partially supported by NSF grant HRD--1700236.}
\address{Department of Mathematics\\Temple University\\Philadelphia, PA 19122}
\email{gutierre@temple.edu}
\address{Department of Mathematics \& Statistics, Wright State University, Dayton, OH 45435}
\email{qingbo.huang@wright.edu}
\address{Department of Mathematics, Howard University, Washington, D.C. 20059}
\email{henok.mawi@howard.edu}


\maketitle
\tableofcontents

\section{Introduction}
Anisotropic materials are those whose optical properties vary according to the direction of propagation of light. Typical examples are crystals, where the refractive index depends on the direction of the incident light, see \cite[Chapter XV]{book:born-wolf}, \cite[Chapter XI]{landau-lifshitz:electrodynamics} and \cite[Chapter IV]{sommerfeld:optics}.
Important research was done on this subject because of it multiple applications, see the fundamental work
\cite{kline-kay:electromagneticwaves}, and \cite{yariv1988:optical-waves-in-crystals}, \cite{scharf:polarized-light-in-crystals} for more recent applications and references.
Mathematically, in these materials
wave fronts satisfy the Fresnel partial differential equation which in the particular case of isotropic materials is the eikonal equation.
A difficulty with anisotropic materials in the geometrical optics regime is that incident rays may be refracted into two rays: an ordinary ray and an extraordinary ray. This is the phenomenon of bi-refringence, observed experimentally in crystals, and is a consequence from the fact that the Fresnel equation splits in general as the product of two surfaces, see \eqref{eq:fresnel equations split}.

The main purpose of this paper is to show existence of interfaces between two homogenous and anisotropic materials so that light is refracted in accordance with a given pattern of energy.
As a main step to achieve this, we give a formulation of a vector Snell's law in anisotropic materials when the wave fronts are given by norms in $\R^n$\footnote{There is no mathematical objection to look at this problem in $n$ dimensions but the physical problem is three dimensional.} which has independent interest. 
More precisely, suppose $N_i(x)$, $i=1,2$, are norms in $\R^n$, $\Sigma_i=\{x:N_i(x)=1\}$, $\Omega_i\subset \Sigma_i$ are domains, $f>0$ is an integrable function on $\Omega_1$, and $\mu$ is a Radon measure in $\Omega_2$ with $\int_{\Omega_1}f(x)\,dx=\mu(\Omega_2)$. 
We have two anisotropic media $I$ and $II$ such that the wave fronts in $I$ are given by $N_1$ and the wave fronts in $II$ given by $N_2$. 
Light rays emanate from the origin, located in medium $I$, with intensity $f(x)$ for each $x\in \Omega_1$. We seek a surface $\mathcal S$ separating media $I$ and $II$ so that all rays emanating from the origin and with directions in $\Omega_1$ are refracted into rays with directions in $\Omega_2$ and the conservation of energy condition 
\[
\int_{\tau(E)}f(x)\,dx=\mu(E)
\] 
holds for each Borel set $E\subset \Omega_2$ where $\tau(E)=\{x\in \Omega_1:\text{$x$ is refracted into $E$}\}$.
This is called the refractor problem, and when media $I$ and $II$ are homogeneous and isotropic it is solved in \cite{gutierrez-huang:farfieldrefractor} using optimal mass transport and in \cite{gutierrez:cimelectures} with a different method.
A main difficulty to solve this problem is to lay down the mathematical formulation of the physical laws and constraints in anisotropic media to our setting with norms. 


To place our results in perspective we mention the following. A similar problem for norms but for reflection was studied in \cite{caffarelli-huang:nonisotropicreflector}.
Once the Snell law and physical constraints for anisotropic media are formulated and proved in Section \ref{sec:vector snell law for anisotropic media}, our existence results use the abstract method developed in \cite{gutierrez-huang:nearfieldrefractor}, where existence results for the near field refractor problem in homogenous and isotropic media are obtained. Further results on geometric optics problems for refraction in homogeneous and isotropic media have been studied in  \cite{gutierrez-mawi:refractorwithlossofenergy}, \cite{deleo-gutierrez-mawi:numericalrefractor}, and \cite{aram:parallelbeam}.
On the other hand, the mathematical literature for these problems in anisotropic media is lacking.

The paper is organized as follows. Section \ref{sec:preliminaries on norms and convexity} recalls a few results on norms and convexity that will be used later.
The Snell law in anisotropic media is proved in Section \ref{sec:vector snell law for anisotropic media} as a consequence of Fermat's principle of least time. The discussion on the physical constraints for refraction in anisotropic media is in Section \ref{subsect:physical constraints}.
Section \ref{sec:Uniformly refracting surfaces} introduces and analyzes the surfaces refracting all rays into a fixed direction which are used in Section \ref{sec:kappa less than one} to show the existence Theorems   
\ref{thm:discrete case} and \ref{thm:existence general measure}.
Section \ref{sec:propagation light in anistropic materials} introduces and analyzes the Fresnel pde for the wave fronts in general materials non homogenous and anisotropic.
In Section \ref{subsec:application of results when mu=a epsilon} we apply the results from the previous sections to materials having permittivity and permeability coefficients $\epsilon$ and $\mu$ that are constant matrices with one a constant multiple of the other.
Finally, in Section \ref{sec:optimal mass transport} we relate our problem with optimal mass transport.

\setcounter{equation}{0}

\section{Preliminaries on Norms and Convexity}\label{sec:preliminaries on norms and convexity}
\setcounter{equation}{0}
Consider a norm $N(x)=\|x\|$ in $\R^n$, and let $\Sigma=\{x\in \R^n:N(x)=1\}$ be the unit sphere in the norm such that $\Sigma$ is a strictly convex surface.

Given a vector $\nu\in S^{n-1}$, the support function of $\Sigma$ is defined by 
\[
\varphi(\nu)=\sup_{x\in \Sigma}x\cdot \nu.
\]
Clearly $\varphi$ is strictly positive ($\varphi(\nu)\geq 1/\|\nu\|$).  Since $\Sigma$ is compact, there is $x_0\in \Sigma$ such that $\varphi(\nu)=x_0\cdot \nu$, and since $\Sigma$ is strictly convex $x_0$ is unique.
The hyperplane $\Pi_\nu$ with equation $\{x:\ell(x)=x\cdot \nu-\varphi(\nu)=0\}$ is a supporting hyperplane to $\Sigma$ at $x_0$. That is,
$\ell(x)\leq 0$ for all $x\in \Sigma$ and $\ell(x_0)=0$.
We then have a map $\nu\in S^{n-1}\mapsto x_0\in \Sigma$
\[
\Phi:S^{n-1}\to \Sigma,\qquad \Phi(\nu)=x_0.
\] 
This map assigns to each vector $\nu\in S^{n-1}$ a unique point $\Phi(\nu)\in \Sigma$ (uniqueness follows from the strict convexity of $\Sigma$) so that $\Pi_\nu$ is a supporting hyperplane to $\Sigma$ at $\Phi(\nu)$. 
In addition, also from the strict convexity of $\Sigma$, and consequently the uniqueness of maximizer of $\varphi$, we have $\Phi(-\nu)=-\Phi(\nu)$.
$\Phi$ is called the support map.

The dual norm of $N$, denoted by $N^*(x)=\|x\|^*$ is defined as follows.
Since $\R^n$ is finite dimensional, each linear functional $\ell:\R^n\to \R$ can be represented as $\ell(x)=y\cdot x$ for a unique $y\in \R^n$.
Given $y\in \R^n$, the dual norm of $N$ is then $N^*(y)=\sup_{x\in \Sigma}|x\cdot y|$, and writing $y=\lambda \,\nu$ with $\nu \in S^{n-1}$, we get that $N^*(y)= \lambda\,\p(\nu)$.
Hence the dual norm sphere of $\Sigma$ is $\Sigma^*=\{y\in \R^n:N^*(y)=1\}=\{\nu/\p(\nu):\nu\in S^{n-1}\}$.
We recall the following.
\begin{lemma}\cite[Lemma 2.3]{caffarelli-huang:nonisotropicreflector}\label{lm:lemma 2.3 CH}
For each $x\in \Sigma$ and $\nu^*=\nu/\p(\nu)$ with $\nu\in S^{n-1}$ we have 
\begin{enumerate}
\item[(a)] $|x\cdot \nu^*|\leq 1$; and
\item[(b)] $x\cdot \nu^*=1$ if and only if $x\in \Phi(\nu)$.
\end{enumerate}
\end{lemma}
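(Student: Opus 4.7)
The plan is to read (a) and (b) directly off the definitions of the support function $\varphi$ and the support map $\Phi$, using the strict convexity of $\Sigma$ only for the uniqueness half of (b), and using the symmetry of the norm ($N(-x)=N(x)$, so $\Sigma=-\Sigma$) for the lower bound in (a).

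First I would dispense with (a). By definition of $\varphi$, for every $x\in\Sigma$ we have $x\cdot\nu\le\varphi(\nu)$, so dividing by the positive number $\varphi(\nu)$ yields $x\cdot\nu^{*}\le 1$. Since $N$ is a norm, $N(-x)=N(x)$, so $-x\in\Sigma$ whenever $x\in\Sigma$; applying the previous inequality to $-x$ gives $-x\cdot\nu\le\varphi(\nu)$, i.e.\ $x\cdot\nu^{*}\ge -1$. Together these give $|x\cdot\nu^{*}|\le 1$.

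Next I would handle (b). For the easy direction, assume $x=\Phi(\nu)$. By the definition of $\Phi$ (the unique maximizer of $y\mapsto y\cdot\nu$ on $\Sigma$), we have $x\cdot\nu=\varphi(\nu)$, hence $x\cdot\nu^{*}=1$. For the converse, assume $x\in\Sigma$ satisfies $x\cdot\nu^{*}=1$, equivalently $x\cdot\nu=\varphi(\nu)$. Then $x$ is a maximizer of $y\cdot\nu$ over $y\in\Sigma$; by the strict convexity of $\Sigma$, the maximizer is unique, so $x=\Phi(\nu)$.

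There is no substantive obstacle here: both parts are immediate from the definitions once one observes that (i)~the symmetry $\Sigma=-\Sigma$ promotes the one-sided bound $x\cdot\nu\le\varphi(\nu)$ to a two-sided bound, and (ii)~strict convexity of $\Sigma$ is exactly what makes the maximizer in the definition of $\varphi(\nu)$ unique, which is the only nontrivial ingredient in the ``only if'' half of (b). No further lemmas from the paper are needed.
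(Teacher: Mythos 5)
Your proof is correct. Note that the paper does not prove this lemma at all---it simply quotes it from Caffarelli--Huang \cite[Lemma 2.3]{caffarelli-huang:nonisotropicreflector}---and your argument is exactly the expected one: the one-sided bound $x\cdot\nu\le\varphi(\nu)$ is the definition of the support function, the symmetry $\Sigma=-\Sigma$ upgrades it to the two-sided bound in (a), and the equality case in (b) is precisely the statement that $x$ maximizes $y\mapsto y\cdot\nu$ on $\Sigma$, whose maximizer the paper has already observed is unique by strict convexity.
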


Since $\Sigma$ is a convex surface, for each $x\in \Sigma$, there is a supporting hyperplane to $\Sigma$ at $x$ and let $\nu(x)$ be the outer unit normal to such a supporting hyperplane.  If $\Sigma$ is such that at each $x$ one can pick a supporting hyperplane with normal $\nu(x)$ in such a way that $\nu(x)$ is continuous for all $x\in \Sigma$, that is, {\it $\Sigma$ has a continuous normal field,} then from the proof of \cite[Theorem 6.2]{caffarelli-gutierrez-huang:antennaannals}, $\Sigma$ has a unique tangent plane at each point; that is, $\Sigma$ is differentiable. 

With the notation from \cite{caffarelli-huang:nonisotropicreflector}, the Minkowski functional of $\Sigma$ defined by $M_\Sigma(x)=\inf\{r>0: x\in r\,B\}$ satisfies $M_\Sigma(x)=N(x)$, where $B=\{x\in \R^n:N(x)\leq 1\}$.
It is proved in \cite[Lemma 2.4]{caffarelli-huang:nonisotropicreflector} that $\Sigma$ has a continuous normal field $\nu(x)$ if and only if $N\in C^1\(\R^n\setminus \{0\}\)$; and $p(x):=\nabla N(x)=\nu(x)/\p(\nu(x))$. 
We also recall \cite[Lemma 2.5]{caffarelli-huang:nonisotropicreflector} saying that if $\Sigma$ is $C^1$, then $\Sigma^*$ is strictly convex.
%
%
Also, if $\Sigma$ is strictly convex, then $\Sigma^*\in C^1$, \cite[Lemma 2.7]{caffarelli-huang:nonisotropicreflector}. 
Therefore, if $\Sigma$  is strictly convex and $C^1$, then $p:\Sigma\to \Sigma^*$ is an homeomorphism and $p^*\circ p=Id$; $p^*=\nabla N^*$.

\section{A vector Snell's law for anisotropic media}\label{sec:vector snell law for anisotropic media}
\setcounter{equation}{0}

We have two homogenous\footnote{In homogenous media light travels in straight lines. This follows from \cite[Equation (3.97)]{kline-kay:electromagneticwaves} because the function $H$ there is independent of $x,y,z$.} and anisotropic media $I$ and $II$ so that the surfaces for the wave fronts are
given by a norm $N_1$ in $I$, and given by a norm $N_2$ in $II$.\footnote{A wave front is a surface in 3d space described by $\psi(x)=c\,t$ where $\psi$ is a function, $c$ is the speed of light in vacuum and $t$ is time. This means the points $x$ on the wave front that are travelling for a time $t$ are located on the surface $\psi(x)=c\,t$; see \cite[Chapter II, Sec. 2]{kline-kay:electromagneticwaves}.} 
We are assuming that the norms $N_i\in C^1\(\R^n\setminus \{0\}\)$ and the corresponding unit spheres $\Sigma_i$ are strictly convex, $i=1,2$.
Suppose $I$ and $II$ are separated by a plane having normal $\nu$ from medium $I$ to medium $II$ as in Figure \ref{fig:snell law}.

\begin{figure}[h]
\includegraphics[scale=.5, angle=0]{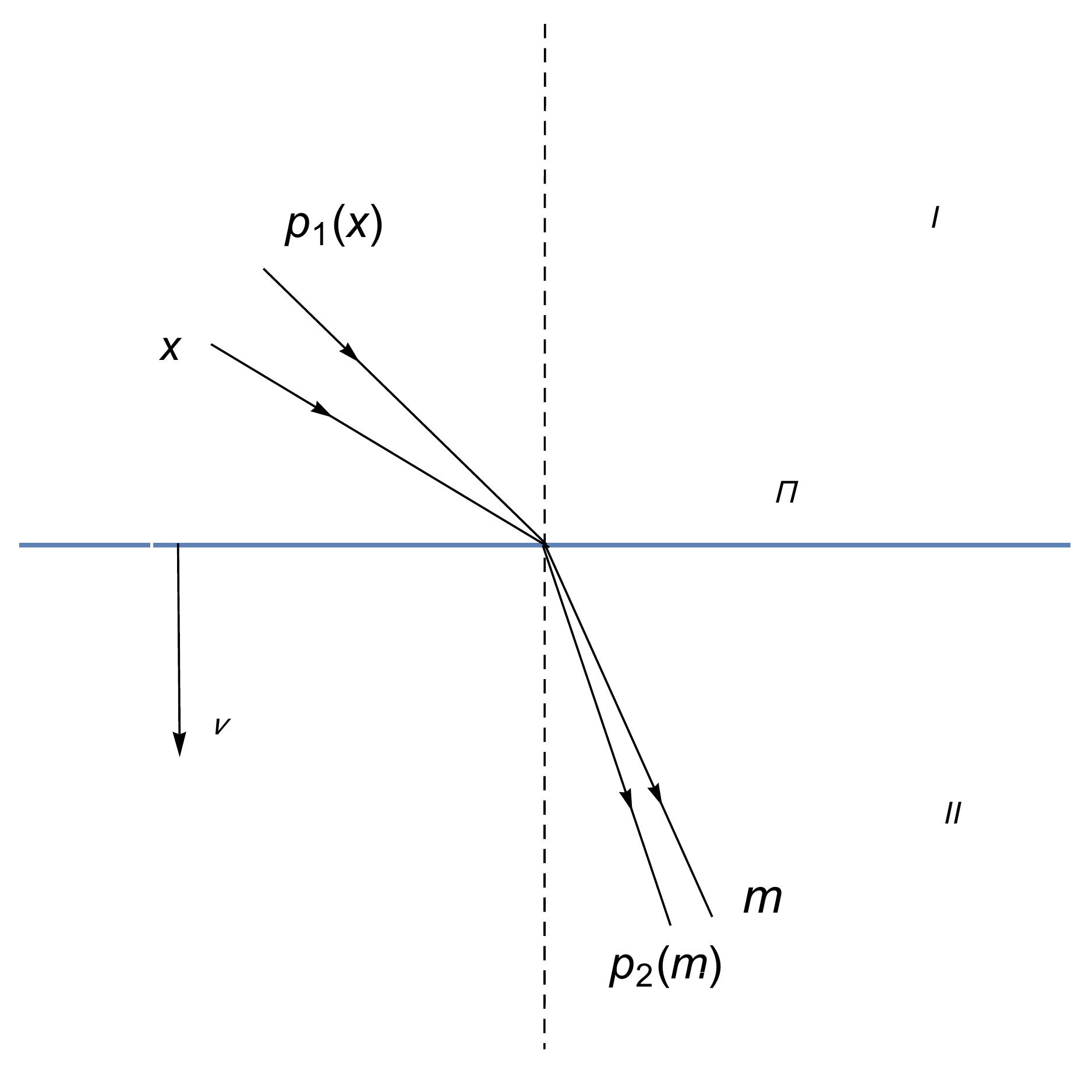}
\caption{Snell's law}
\label{fig:snell law}
\end{figure}

We formulate the Snell law in anisotropic media as follows: 
Each incident ray traveling in medium $I$ with direction $x\in \Sigma_1$
with $x\cdot \nu\geq 0$ and striking the plane $\Pi$ at some point $P_0$ is refracted in medium $II$ into a direction $m\in \Sigma_2$ if 
\begin{equation}\label{eq:vector snell law}
p_2(m)-p_1(x)\parallel \nu,
\end{equation}
with $p_i=\nabla N_i$, $i=1,2$; see Figure \ref{fig:snell law}. 
For each $x\in \Sigma_1$ with $x\cdot \nu\geq 0$, there is at most one $m\in \Sigma_2$ satisfying \eqref{eq:vector snell law} with $m\cdot \nu\geq 0$.
In fact, from \eqref{eq:vector snell law} there is $\lambda\in \R$ with $p_2(m)=p_1(x)+\lambda \,\nu$. That is, if $\ell$ is the line passing through $p_1(x)$ with direction $\nu$, then $p_2(m)\in \ell\cap \Sigma_2^*$. So if there were $m_1\neq m_2$ satisfying \eqref{eq:vector snell law}, then $p_2(m_i)\in \ell\cap \Sigma_2^*$, $i=1,2$.
The outer normal to $\Sigma_2^*$ at $p_2(m_i)$ equals to $\nabla N_2^*\(p_2(m_i)\)=p_2^*\(p_2(m_i)\)=m_i$ because $p_2:\Sigma_2\to \Sigma_2^*$ is a homeomorphism since $\Sigma_2$ is $C^1$ and strictly convex. And also $p_2(m_1)\neq p_2(m_2)$.
On the other hand, since the points $p_2(m_i)$ are also on the line $\ell$ and $\Sigma_2^*$ is strictly convex, the normals at $p_2(m_i)$ must satisfy that its dot products with $\nu$ must have different signs. Therefore, it cannot both happen $m_1\cdot \nu\geq 0$ and $m_2\cdot \nu\geq 0$. Therefore, only one of the two $m_i$ can satisfy $m_i\cdot \nu\geq 0$.

Physically, the norm $N(x)=1$ represents the location of the points $x$ after traveling for a time $t$, with $c\,t=1$, from the origin into the given medium. For example, if we are in an homogenous and isotropic medium with refractive index $n_1$, then the wave propagates from the origin with  velocity $v_1=c/n_1$. So if $x$ satisfies $N(x)=1$, then the Euclidean distance from $O$ to $x$ 
must satisfy $|x|/t=v_1$. Since $t=1/c$, we obtain $|x|=v_1/c=1/n_1$ so $N(x)=n_1\,|x|$. Therefore, if medium $I$ has refractive index $n_1$ and medium $II$ has refractive index $n_2$, then $N_1(x)=n_1\,|x|$ and $N_2(x)=n_2\,|x|$.
We then have $p_i(x)=\nabla N_i(x)=n_i\,\dfrac{x}{|x|}$, $i=1,2$, $x\neq 0$, and so from \eqref{eq:vector snell law} we recover the standard Snell law: the unit incident direction $x$ is refracted into the unit direction $m$ when $n_1\,x-n_2\,m\parallel \nu$, see \cite[Formula (2.1)]{gutierrez-huang:farfieldrefractor}.

\color{black}
We shall prove that \eqref{eq:vector snell law} is equivalent to Fermat's principle of least time with respect to the norms $N_1,N_2$ where $\Sigma_1$ and $\Sigma_2$ strictly convex.
Suppose two anisotropic media $I,II$ with norms $N_1$ in $I$ and $N_2$ in $II$, are separated by a plane $\Pi$ as in Figure \ref{fig:snell law}. Given $X\in I$ and $Y\in II$, then Fermat's principle states that the (minimal) optical path from $X$ to $Y$ through the plane $\Pi$ 
is the path $XP_0Y$ where $P_0\in \Pi$ is the unique point such that
\begin{equation}\label{eq:Fermat principle}
\min \{N_1(P-X)+N_2(Y-P):P\in \Pi\}=N_1(P_0-X)+N_2(Y-P_0).
\end{equation}
Equation \eqref{eq:Fermat principle} implies that
\begin{equation}\label{eq:formulation with points}
p_2(Y-P_0)-p_1(P_0-X)\parallel \nu
\end{equation}
where $\nu$ is the normal to $\Pi$.
In fact, for $P\in \Pi$ we can write $P=P_0+\sum_{i=1}^{n-1}t_i\,e_i$ where $e_1,\cdots ,e_{n-1}$ is a basis for $\Pi$. From \eqref{eq:Fermat principle}
\begin{align*}
&\dfrac{\partial }{\partial t_j}\(N_1\(P_0-X+\sum_{i=1}^{n-1}t_i\,e_i\)+N_2\(P_0-Y+\sum_{i=1}^{n-1}t_i\,e_i\) \)\\
&= p_1\(P_0-X+\sum_{i=1}^{n-1}t_i\,e_i\)\cdot e_j+p_2\(P_0-Y+\sum_{i=1}^{n-1}t_i\,e_i\)\cdot e_j=0
\end{align*}
when $t_j=0$, for $j=1,\cdots ,n-1$. Since $p_i(-X)=-p_i(X)$, \eqref{eq:formulation with points} follows.
Also since $p_i$ are homogenous of degree zero, from \eqref{eq:formulation with points} we then obtain \eqref{eq:vector snell law} with 
$m=\dfrac{Y-P_0}{N_2(Y-P_0)}$ and $x=\dfrac{P_0-X}{N_1(P_0-X)}$.

Vice versa, from \eqref{eq:vector snell law} we deduce \eqref{eq:Fermat principle}.
In fact, let us fix $X\in I$ and $Y\in II$, and consider a path $XP_0Y$ with $P_0\in \Pi$.
The ray from $X$ to $P_0$ has direction $x=\dfrac{P_0-X}{N_1(P_0-x)}$, and the ray from $P_0$ to $Y$ has direction $m=\dfrac{Y-P_0}{N_2(Y-P_0)}$. 
If $x$ is refracted into $m$, then from \eqref{eq:vector snell law}, $p_2\(\dfrac{Y-P_0}{N_2(Y-P_0)}\)-p_1\(\dfrac{P_0-X}{N_1(P_0-x)}\)\parallel \nu$, and since $p_i$ are homogeneous of degree zero  
\begin{equation}\label{eq:snell without norm}
p_2\(Y-P_0\)-p_1\(P_0-X\)\parallel \nu. 
\end{equation}
Consider the functional $F(P)=N_1(P-X)+N_2(Y-P)$ for $P\in \Pi$. Write $P\in \Pi$ as $P=P_0+\sum_{i=1}^{n-1}t_i\,e_i$ where $e_1,\cdots ,e_{n-1}$ is a basis for $\Pi$.
We can write $F(P)=F(t_1,\cdots ,t_{n-1})$. As before and from \eqref{eq:snell without norm}
\[
\dfrac{\partial }{\partial t_j}F(t_1,\cdots ,t_{n-1})
= p_1\(P_0-X+\sum_{i=1}^{n-1}t_i\,e_i\)\cdot e_j+p_2\(P_0-Y+\sum_{i=1}^{n-1}t_i\,e_i\)\cdot e_j=0
\] 
for $j=1,\cdots ,n-1$ and $t_1=\cdots =t_{n-1}=0$.
On the other hand, since the functional $F$ on $\Pi$ is strictly convex, there are unique $t_1^0,\cdots ,t_{n-1}^0$ such that the minimum of $F$ is attained at $t_1^0,\cdots ,t_{n-1}^0$. Therefore, $t_1^0=\cdots =t_{n-1}^0=0$ and the point $P_0$ is the unique point in $\Pi$ minimizing $F$, obtaining Fermat's principle. 

\subsection{Physical constraints for refraction}\label{subsect:physical constraints} 
Since the incident ray $x$ is in media $I$ we must have $x\cdot \nu\geq 0$, where $\nu$ is the normal to the hyperplane separating $I$ and $II$, $\nu$ having direction from media $I$ to media $II$.
Similarly, since the refracted ray $m$ is in media $II$, we also have $m\cdot \nu\geq 0$; Figure \ref{fig:snell law}.

We analyze here the meaning of these two physical constraints $x\cdot \nu\geq 0$ and $m\cdot \nu\geq 0$ in the following two cases.

{\bf Case 1:}
Let us assume first that $\Sigma_1$ is contained inside $\Sigma_2$, i.e., $N_2(x)\leq 1$ for all $N_1(x)=1$. 
If $x\in \Sigma_1$, then $p_1(x)\in \Sigma_1^*$ and from Lemma \ref{lm:lemma 2.3 CH}(b) $x\cdot p_1(x)=1$.
Hence, if $x\in \Sigma_1$ and $m\in \Sigma_2$, then 
\begin{align*}
x\cdot \(p_1(x)-p_2(m)\)&=
1-x\cdot p_2(m)\\
&=
1-N_2(x)\,\dfrac{x}{N_2(x)}\cdot p_2(m)\geq 1-N_2(x)\qquad \text{from Lemma \ref{lm:lemma 2.3 CH}(a)}\\
&\geq 0\qquad \text{from the assumption $\Sigma_1$ contained inside $\Sigma_2$}.
\end{align*}
Thus, if $x\cdot \nu\geq 0$ and $x$ is refracted into $m$, then from \eqref{eq:vector snell law} $p_1(x)-p_2(m)=\lambda \,\nu$ and so $\lambda>0$.
Hence, $m\cdot \nu\geq 0$ and Lemma \ref{lm:lemma 2.3 CH}(b) imply that
\[
m\cdot p_1(x)-1=m\cdot \(p_1(x)-p_2(m)\)=\lambda\,m\cdot \nu\geq 0.
\]
Therefore, when $\Sigma_1$ is contained in the interior of $\Sigma_2$, we obtain the physical constraint
\begin{equation}\label{eq:physical constraint for m}
m\cdot p_1(x)\geq 1,
\end{equation}
for refraction of $x\in \Sigma_1$ into $m\in \Sigma_2$.

{\bf Case 2:}
Let us assume now that $\Sigma_2$ is contained inside $\Sigma_1$, i.e., $N_1(x)\leq 1$ for all $N_2(x)=1$. 
Reasoning as in the first case, we have
for $x\in \Sigma_1$ and $m\in \Sigma_2$, that 
\begin{align*}
m\cdot \(p_2(m)-p_1(x)\)&=
1-m\cdot p_1(x)\\
&=
1-N_1(m)\,\dfrac{m}{N_1(m)}\cdot p_1(x)\geq 1-N_1(m)\qquad \text{from Lemma \ref{lm:lemma 2.3 CH}(a)}\\
&\geq 0\qquad \text{from the assumption $\Sigma_2$ contained inside $\Sigma_1$}.
\end{align*}
Thus, if $m\cdot \nu\geq 0$ and $x$ is refracted into $m$, then from \eqref{eq:vector snell law} $p_2(m)-p_1(x)=\lambda \,\nu$ and so $\lambda>0$.
Hence, $x\cdot \nu\geq 0$ and Lemma \ref{lm:lemma 2.3 CH}(b) imply that
\[
x\cdot p_2(m)-1=x\cdot \(p_2(m)-p_1(x)\)=\lambda\,x\cdot \nu\geq 0.
\]
Therefore, when $\Sigma_2$ is contained in the interior of $\Sigma_1$, we obtain the physical constraint
\begin{equation}\label{eq:physical constraint for x}
x\cdot p_2(m)\geq 1,
\end{equation}
for refraction of $x\in \Sigma_1$ into $m\in \Sigma_2$.

Notice that, as explained before, if medium $I$ has refractive index $n_1$ and medium $II$ has refractive index $n_2$, then $N_1(x)=n_1\,|x|$ and $N_2(x)=n_2\,|x|$; and 
$p_i(x)=n_i\,\dfrac{x}{|x|}$, $i=1,2$.
Hence, if $n_1>n_2$ we are in Case 1 above, and so \eqref{eq:physical constraint for m}
reads $x\cdot m\geq n_2/n_1$ for $x,m$ unit vectors. If $n_1<n_2$ then we are then in Case 2, and so \eqref{eq:physical constraint for x}
reads $x\cdot m\geq n_1/n_2$ for $x,m$ unit vectors.
Therefore, when the media $I$ and $II$ are homogenous and isotropic we recover the physical constraints showed in \cite[Lemma 2.1]{gutierrez-huang:farfieldrefractor}.

\color{black}
%
%

\section{Uniformly refracting surfaces}\label{sec:Uniformly refracting surfaces}
\setcounter{equation}{0}
In this section, we shall describe the surfaces separating two anisotropic materials $I$ and $II$, like in Section \ref{sec:vector snell law for anisotropic media}, so that rays emanating from a point source, the origin, located in medium $I$ are refracted in medium $II$ into a fixed direction $m\in \Sigma_2$. 
These surfaces will have the form
\begin{equation}\label{eq:general refracting surfaces}
\left\{X\in \R^n: N_1(X)=p_2(m)\cdot X+b \right\},
\end{equation}
where $b\in \R$. If we write $X=\rho(x)\,x$ for $N_1(x)=1$, then the polar radius
\[
\rho(x)=\dfrac{b}{1-x\cdot p_2(m)}\qquad \text{for $x\in \Sigma_1$}.
\]
To show that these surfaces do the desired refraction job,  as in Section \ref{subsect:physical constraints} we distinguish two cases.

{\bf Case I:} $\Sigma_1$ is strictly contained in the interior of $\Sigma_2$, that is,
\begin{equation}\label{eq:kappa<1}
\kappa=\sup_{N_1(x)=1}N_2(x)<1.
\end{equation}
In this case, given $m\in \Sigma_2$, the desired surface is 
\begin{equation}\label{eq:uniformly refracting surface k<1}
S_I(m,b)=\left\{\rho(x)x:\rho(x)=\dfrac{b}{1-x\cdot p_2(m)},\text{ $x\in \Sigma_1$ with $m\cdot p_1(x)\geq 1$}\right\}
\end{equation}
with $b>0$. In fact, to verify that each ray with direction $x\in \Sigma_1$ such that $m\cdot p_1(x)\geq 1$ is refracted by $S_I(m,b)$ into $m$, we need to verify that \eqref{eq:vector snell law} holds, and the physical constraints $x\cdot \nu\geq 0$ and $m\cdot \nu\geq 0$ are met with $\nu$ the normal from medium $I$ to $II$. From \eqref{eq:general refracting surfaces}, the outward normal at a point $X$ is 
$\nu=p_1(x)-p_2(m)$ with $x=X/N_1(X)$ and so \eqref{eq:vector snell law} holds.
From Lemma \ref{lm:lemma 2.3 CH}
\begin{align}\label{eq:1-x p2m>1-kappa}
x\cdot \nu&=x\cdot \(p_1(x)-p_2(m)\)=1-x\cdot p_2(m)\notag\\
&=
1-N_2(x)\,\dfrac{x}{N_2(x)}\cdot p_2(m)\geq 1-N_2(x)>1-\kappa>0
\end{align}
from \eqref{eq:kappa<1}. Also $m\cdot \nu=m\cdot \(p_1(x)-p_2(m)\)=
m\cdot p_1(x)-m\cdot p_2(m)=m\cdot p_1(x)-1\geq 0$ by the definition of $S_I$.

{\bf Case II:} $\Sigma_2$ is strictly contained in the interior of $\Sigma_1$, that is,
\begin{equation}\label{eq:kappa>1}
\kappa=\inf_{N_1(x)=1}N_2(x)>1.
\end{equation}
In this case, given $m\in \Sigma_2$, the desired surface is 
\begin{equation}\label{eq:uniformly refracting surface k>1}
S_{II}(m,b)=\left\{\rho(x)x:\rho(x)=\dfrac{b}{x\cdot p_2(m)-1},\text{ $x\in \Sigma_1$ with $x\cdot p_2(m)> 1$}\right\}
\end{equation}
with $b>0$. In fact and once again, to verify that each ray with direction $x\in \Sigma_1$ such that $x\cdot p_2(m)> 1$ is refracted by $S_{II}(m,b)$ into $m$, we need to verify that \eqref{eq:vector snell law} holds, and the physical constraints $x\cdot \nu\geq 0$ and $m\cdot \nu\geq 0$ are met with $\nu$ the normal towards medium $II$. From the definition of $S_{II}$, the outward normal at a point $X$ is 
$\nu=p_2(m)-p_1(x)$ with $x=X/N_1(X)$ and so \eqref{eq:vector snell law} holds.
From Lemma \ref{lm:lemma 2.3 CH}
\begin{align*}
m\cdot \nu&=m\cdot \(p_2(m)-p_1(x)\)=1-m\cdot p_1(x)\\
&=
1-N_1(m)\,\dfrac{m}{N_1(m)}\cdot p_1(x)\geq 1-N_1(m)\geq 1-\dfrac{1}{\kappa}\,N_2(m)=
1-\dfrac{1}{\kappa}>0
\end{align*}
from \eqref{eq:kappa>1}. Also $x\cdot \nu=x\cdot \(p_2(m)-p_1(x)\)=
x\cdot p_2(m)-1> 0$ by the definition of $S_{II}$.

\begin{remark}\rm
If medium $I$ is homogeneous and isotropic with refractive index $n_1$, then $N_1(x)=n_1\,|x|$. Also, if $II$ is also similar with refractive index $n_2$, then $N_2(x)=n_2\,|x|$. 
In this case, condition \eqref{eq:kappa<1} is equivalent to $n_1>n_2$, and the surface $S_{I}(m,b)$ is a half ellipsoid of revolution with axis $m$, recovering the surfaces from \cite[Formula (2.8)]{gutierrez-huang:farfieldrefractor}.
Similarly, condition \eqref{eq:kappa>1} is equivalent to $n_1<n_2$, and $S_{II}(m,b)$ is one of the branches of a hyperboloid of two sheets as in \cite[Formula (2.9)]{gutierrez-huang:farfieldrefractor}.
\end{remark}

\color{black}
\section{The refractor problem when $\kappa<1$, $\kappa$ in \eqref{eq:kappa<1}}\label{sec:kappa less than one}
\setcounter{equation}{0}

Using the uniformly refracting surfaces introduced in Section \ref{sec:Uniformly refracting surfaces}, we state and solve here the refraction problem we are interested in.

We are given two closed domains $\Omega_1\subset \Sigma_1$, $\Omega_2\subset \Sigma_2$, a non negative function $f\in L^1(\Omega_1)$, and a Radon measure $\mu$ in $\Omega_2$ satisfying the following conditions:
\begin{enumerate}
\item[(a)] the surface measure of the boundary of $\Omega_1$ is zero;
\item[(b)] $\inf_{x\in \Omega_1,m\in \Omega_2}m\cdot p_1(x)\geq 1$;
\item[(c)] $\Sigma_1$ and $\Sigma_2$ are $C^1$ and strictly convex.
\end{enumerate}
 
Refractors are then defined as follows.
\begin{definition}\label{def:refractor}
The surface $S=\{\rho(x)\,x:x\in \Omega_1\}$, with $\rho\in C(\Omega_1)$, $\rho>0$, is a refractor from $\Omega_1$ to $\Omega_2$, if for each $x_0\in \Omega_1$ there exist $m\in \Omega_2$ and $b>0$ such that the surface $S_I(m,b)$ supports $S$ at $x_0$, that is,
\[
\rho(x)\leq \dfrac{b}{1-x\cdot p_2(m)}\quad \text{for all $x\in \Omega_1$ with equality at $x=x_0$}.
\] 

The refractor mapping associated with the refractor $S$ is the set valued function 
\begin{equation}\label{def:refractor map}
\mathcal R_S(x_0)=\{m\in \Omega_2: \text{there exists $S_I(m,b)$ supporting $S$ at $x_0$}\}.
\end{equation}
\end{definition} 
We have the following lemma.
\begin{lemma}\label{lm:refractors are Lipschitz continuous}
If a refractor $S$ is parametrized by $\rho(x)\,x$, then $\rho$ is Lipschitz continuous in $\Omega_1$.
\end{lemma}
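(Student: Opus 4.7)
The plan is to represent $\rho$ as the pointwise infimum of the smooth supporting functions
\[
h_{m,b}(x)=\frac{b}{1-x\cdot p_2(m)},\qquad m\in \Omega_2,\ b>0,
\]
and to show these functions are uniformly Lipschitz in $x$ over all admissible parameters $(m,b)$, whence $\rho$ inherits the same Lipschitz constant.

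First I will bound the parameters that can arise as supports. Since $\rho\in C(\Omega_1)$ is positive and $\Omega_1$ is compact, there exist $0<\rho_1\leq \rho_2<\infty$ with $\rho_1\leq \rho\leq \rho_2$ on $\Omega_1$. At any support with parameters $(m,b)$ at $x_0$, the equality case of Definition \ref{def:refractor} yields $b=\rho(x_0)\,(1-x_0\cdot p_2(m))$. To control $x_0\cdot p_2(m)$ I would write
\[
x_0\cdot p_2(m)=N_2(x_0)\,\frac{x_0}{N_2(x_0)}\cdot p_2(m)
\]
and apply Lemma \ref{lm:lemma 2.3 CH}(a) in the norm $N_2$, which together with the Case I hypothesis \eqref{eq:kappa<1} gives $|x_0\cdot p_2(m)|\leq N_2(x_0)\leq \kappa$. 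Hence $1-\kappa\leq 1-x_0\cdot p_2(m)\leq 1+\kappa$ and so the uniform bound $\rho_1(1-\kappa)\leq b\leq \rho_2(1+\kappa)$ holds.

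Next I evaluate the increment of a supporting function directly. For any $x,y\in \Omega_1$ and any admissible $(m,b)$,
\[
h_{m,b}(y)-h_{m,b}(x)=\frac{b\,(y-x)\cdot p_2(m)}{(1-x\cdot p_2(m))(1-y\cdot p_2(m))}.
\]
Both denominator factors are at least $1-\kappa>0$ by the computation above, $b$ is uniformly bounded by the previous step, and $|p_2(m)|$ is bounded on the compact set $\Omega_2$ by continuity of $p_2=\nabla N_2$ (using $\Sigma_2\in C^1$ from hypothesis (c)). Hence there is a constant $L$, depending only on $\kappa$, $\rho_1$, $\rho_2$, and $\max_{\Omega_2}|p_2|$, such that $|h_{m,b}(y)-h_{m,b}(x)|\leq L\,|y-x|$ for every admissible $(m,b)$ and all $x,y\in\Omega_1$.

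Finally, given $x,y\in \Omega_1$, let $(m_x,b_x)$ be parameters of a supporting surface of $S$ at $x$. The refractor definition gives $\rho(x)=h_{m_x,b_x}(x)$ and $\rho(y)\leq h_{m_x,b_x}(y)$, so $\rho(y)-\rho(x)\leq L\,|y-x|$; swapping the roles of $x$ and $y$ completes the estimate $|\rho(x)-\rho(y)|\leq L\,|x-y|$. The main obstacle is really the uniform bound on $b$, and it is precisely the gap $\kappa<1$ from Case I that keeps $1-x\cdot p_2(m)$ bounded away from zero and makes the argument work; everything else is routine.
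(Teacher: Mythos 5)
Your proposal is correct and follows essentially the same route as the paper: both use the supporting function $h_{m,b}$ at one of the two points, compute the increment $h_{m,b}(y)-h_{m,b}(x)=\dfrac{b\,(y-x)\cdot p_2(m)}{(1-x\cdot p_2(m))(1-y\cdot p_2(m))}$, and control the denominator via the key bound $1-x\cdot p_2(m)\geq 1-\kappa>0$ from \eqref{eq:1-x p2m>1-kappa}. The only cosmetic difference is that the paper absorbs $b/(1-x_0\cdot p_2(m))$ into $\rho(x_0)\leq\max_{\Omega_1}\rho$ rather than bounding $b$ and each denominator factor separately as you do.
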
 

\begin{proof}
Let $x_0,x\in \Omega_1$ and $S_I(m,b)$ supporting $S$ at $x_0$.
Then
\begin{align*}
\rho(x)-\rho(x_0)&\leq \dfrac{b}{1-x\cdot p_2(m)}- \dfrac{b}{1-x_0\cdot p_2(m)}=b\,\dfrac{(x-x_0)\cdot p_2(m)}{\(1-x\cdot p_2(m)\)\(1-x_0\cdot p_2(m)\)}\\
&= \rho(x_0)\,\dfrac{(x-x_0)\cdot p_2(m)}{1-x\cdot p_2(m)}
\leq
\max_{\Omega_1}\rho\,\dfrac{|x-x_0|\,|p_2(m)|}{1-\kappa}\qquad \text{from \eqref{eq:1-x p2m>1-kappa}}\\
&\leq C\,|x-x_0|
\end{align*}
since $N_i$ and $|\cdot |$ are all equivalent norms, $i=1,2$.
Reversing the roles of $x$ and $x_0$ we obtain the lemma.
\end{proof}

Following the notation from  
\cite[Section 2]{gutierrez-huang:nearfieldrefractor},
we denote by $C_S(\Omega_1,\Omega_2)$ the class of set-valued maps $\Phi:\Omega_1\to \Omega_2$ that are single valued for a.e. $x\in \Omega_1$, with respect to $f\,dx$, that are continuous in $\Omega_1$, and $\Phi(\Omega_1)=\Omega_2$.
Continuity of $\Phi$ at $x_0\in \Omega_1$ means that if $x_k\to x_0$ and $y_k\in \Phi(x_k)$, then there is a subsequence $y_{k_j}$ and $y_0\in \Phi(x_0)$ such that $ y_{k_j}\to y_0$.

\begin{lemma}\label{lm:refractors are C_S}
If $S$ is a refractor from $\Omega_1$ to $\Omega_2$, then the refractor map $\mathcal R_S\in C_S(\Omega_1,\Omega_2)$.
\end{lemma}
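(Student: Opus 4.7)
The statement amounts to verifying three items: single-valuedness of $\mathcal R_S$ at $f\,dx$-almost every point of $\Omega_1$; the continuity of $\mathcal R_S$; and $\mathcal R_S(\Omega_1)=\Omega_2$.

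For single-valuedness, I would invoke Rademacher's theorem on the $C^1$ manifold $\Sigma_1$ together with Lemma~\ref{lm:refractors are Lipschitz continuous} to conclude that $\rho$ is differentiable at $dx$-a.e.~$x_0\in\Omega_1$; since $f\,dx\ll dx$, the same holds $f\,dx$-a.e. At such an $x_0$, the surface $S$ has a well-defined tangent plane at $X_0=\rho(x_0)\,x_0$ with a unique outer unit normal $\nu$. If $m\in\mathcal R_S(x_0)$, the supporting surface $S_I(m,b)$ shares this tangent plane at $X_0$, and from the computation in Case~I of Section~\ref{sec:Uniformly refracting surfaces} its outer normal at $X_0$ equals $p_1(x_0)-p_2(m)$, hence $p_1(x_0)-p_2(m)\parallel\nu$. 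The physical constraints $x_0\cdot\nu\ge 1-\kappa>0$ and $m\cdot\nu\ge 0$ were also verified in that case. The uniqueness statement following \eqref{eq:vector snell law} then forces $m$ to be determined uniquely by $x_0$.

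For continuity, take $x_k\to x_0$ in $\Omega_1$ and $m_k\in\mathcal R_S(x_k)$, and set $b_k=\rho(x_k)\bigl(1-x_k\cdot p_2(m_k)\bigr)$, which is the unique parameter for which $S_I(m_k,b_k)$ supports $S$ at $x_k$. Since $\Omega_2$ is compact, $\rho$ is continuous and positive on the compact set $\Omega_1$, and \eqref{eq:1-x p2m>1-kappa} gives $1-x_k\cdot p_2(m_k)\ge 1-\kappa>0$, the sequence $b_k$ lies in a compact subinterval of $(0,\infty)$. Extract subsequences so that $m_{k_j}\to m_0\in\Omega_2$ and $b_{k_j}\to b_0>0$. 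Passing to the limit in $\rho(x)\le b_{k_j}/(1-x\cdot p_2(m_{k_j}))$ (the denominator is uniformly $\ge 1-\kappa$, so the bound is robust), with equality at $x_{k_j}$, yields that $S_I(m_0,b_0)$ supports $S$ at $x_0$, so $m_0\in\mathcal R_S(x_0)$. For surjectivity, given $m\in\Omega_2$ consider $g(x)=\rho(x)\bigl(1-x\cdot p_2(m)\bigr)$, which is continuous and bounded below by $(1-\kappa)\min_{\Omega_1}\rho>0$ on the compact set $\Omega_1$. Choosing $b=\max_{\Omega_1}g=g(x_0)$ gives $\rho(x)\le b/(1-x\cdot p_2(m))$ with equality at $x_0$, whence $m\in\mathcal R_S(x_0)$; the reverse inclusion $\mathcal R_S(\Omega_1)\subset\Omega_2$ is immediate from Definition~\ref{def:refractor}.

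The main obstacle is the single-valuedness step: once two supporting surfaces $S_I(m_i,b_i)$ share a tangent plane at $X_0$, one must identify the outward normal direction with the correct orientation convention from Case~I and then apply the Snell-law uniqueness from Section~\ref{sec:vector snell law for anisotropic media}, which requires the physical constraints on $\nu$ to hold simultaneously for $x_0$ and for each candidate $m_i$. The continuity and surjectivity steps are compactness arguments made routine by the lower bound \eqref{eq:1-x p2m>1-kappa}.
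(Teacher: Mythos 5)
Your proposal is correct and follows essentially the same route as the paper's proof: surjectivity by maximizing $\rho(x)\bigl(1-x\cdot p_2(m)\bigr)$ over the compact set $\Omega_1$, a.e.\ single-valuedness from the Lipschitz regularity of $\rho$ (Lemma \ref{lm:refractors are Lipschitz continuous}, hence a.e.\ differentiability) combined with the uniqueness statement for the Snell law \eqref{eq:vector snell law} under the constraints $x\cdot\nu\ge 0$, $m_i\cdot\nu\ge 0$, and continuity via the two-sided bounds on $b_k$ and a compactness/subsequence argument. Your write-up is, if anything, slightly more explicit than the paper's at the single-valuedness step, where the paper simply observes that a point with two images must be a singular point of the Lipschitz surface $S$.
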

\begin{proof}
Let $m\in \Omega_2$ and define $b=\max_{x\in \Omega_1}\(\rho(x)\,(1-x\cdot p_2(m) \)$.
From \eqref{eq:1-x p2m>1-kappa}, $b\geq \max_{x\in \Omega_1}\(\rho(x)\,(1-\kappa )\)>0$.
Also, there is $x_0\in \Omega_1$ with $b=\rho(x_0)\,\(1-x_0\cdot p_2(m)\)$ and so $m\in \mathcal R_S(x_0)$, showing that  $\mathcal R_S(\Omega_1)=\Omega_2$.

Next, let us show that $\mathcal R_S(x)$ is single valued for a.e. $x\in \Omega_1$.
In fact, if at $x_0\in \Omega_1$ there exist $m_1\neq m_2\in \Omega_2$ with $m_i\in \mathcal R_S(x_0)$, $i=1,2$, then $x_0$ is a singular point to the surface $S$.
Otherwise, since $S_I(m_i,b_i)$, $i=1,2$ support $S$ at $x_0$, they would have the same tangent plane at $x_0\rho(x_0)$. Therefore, by the Snell law and since there is at most one $m$ satisfying \eqref{eq:vector snell law}, we obtain $m_1=m_2$.
From Lemma \ref{lm:refractors are Lipschitz continuous}, $S$ is Lipschitz, and since $|\partial \Omega_1|=0$, we obtain that $\mathcal R_S(x)$ is single valued a.e. in $\Omega_1$. 

It remains to show that $\mathcal R_S$ is continuous.
Let $x_i\to x_0\in \Omega_1$ and let $m_i\in \mathcal R_S(x_i)$.
Hence $\rho(x)\leq \dfrac{b_i}{1-x\cdot p_2(m_i)}$ for all $x\in \Omega_1$ with equality at $x=x_i$.
As before, $b_i=\rho(x_i)\,\(1-x_i\cdot p_2(m_i)\)\geq (1-\kappa)\,\min_{\Omega_1}\rho $, and $b_i=\rho(x_i)\(1-N_2(x_i)\,\dfrac{x_i}{N_2(x_i)}\cdot p_2(m_i)\)\leq \max_{\Omega_1}\rho\,(1+\kappa)$ from Lemma \ref{lm:lemma 2.3 CH}(a) and \eqref{eq:kappa<1}.
We have $m_i\in \Omega_2\subset \Sigma_2$ and $p_2\in C\(\Sigma_2\)$.
By compactness there are subsequences $m_{i_k}\to m_0\in \Omega_2$ and $b_{i_k}\to b_0>0$ so that $\rho(x)\leq \dfrac{b_0}{1-x\cdot p_2(m_0)}$ for all $x\in \Omega_1$ with equality at $x=x_0$.
This completes the proof to the lemma.

\end{proof}
Using \cite[Lemma 2.1]{gutierrez-huang:nearfieldrefractor}, we obtain from Lemma \ref{lm:refractors are C_S} that if $S$ is a refractor from $\Omega_1$ to $\Omega_2$, then the set function
\begin{equation}\label{def:definition of refractor measure}
\mathcal M_{S,f}(E)=\int_{\mathcal R_S^{-1}(E)}f(x)\,dx
\end{equation}
is a Borel measure in $\Omega_2$, that {\it is called the refractor measure}.

Continuing using the set up from \cite[Section 2]{gutierrez-huang:nearfieldrefractor}, we recall \cite[Definition 2.2]{gutierrez-huang:nearfieldrefractor}: given $\mathcal F\subset C(\Omega_1)$ and a map $\mathcal T:\mathcal F\to C_S(\Omega_1,\Omega_2)$, we say $\mathcal T$ is continuous at $\phi\in \mathcal F$ if whenever $\phi_j\in \mathcal F$, $\phi_j\to \phi$ uniformly in $\Omega_1$, $x_0\in \Omega_1$ and $y_j\in \mathcal T(\phi_j)(x_0)$, then there exists a subsequence $y_{j_\ell}\to y_0$ with $y_0\in \mathcal T(\phi)(x_0)$.    
If we let 
\begin{equation}\label{eq:definition of mathcal F}
\mathcal F=\{\rho\in C(\Omega_1): S_\rho \text{ is a refractor from $\Omega_1$ to $\Omega_2$ parametrized by $\rho(x)x$}\}
\end{equation}
then we have the following lemma.
\begin{lemma}\label{lm:continuity of T}
The mapping $\mathcal T:\mathcal F\to C_S(\Omega_1,\Omega_2)$ defined by $\mathcal T(\rho)=\mathcal R_{S_\rho}$ is continuous at each $\rho\in \mathcal F$. 
\end{lemma}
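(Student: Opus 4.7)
The plan is to unpack the definition of continuity at $\rho\in\mathcal F$ and then pass to the limit in the supporting inequality that defines the refractor map. So fix a sequence $\rho_j\in\mathcal F$ with $\rho_j\to\rho$ uniformly in $\Omega_1$, a point $x_0\in\Omega_1$, and elements $y_j\in\mathcal T(\rho_j)(x_0)=\mathcal R_{S_{\rho_j}}(x_0)$. By Definition \ref{def:refractor}, there exist $b_j>0$ such that
\[
\rho_j(x)\le \frac{b_j}{1-x\cdot p_2(y_j)}\quad \text{for all $x\in\Omega_1$,}
\]
with equality at $x=x_0$; in particular $b_j=\rho_j(x_0)\bigl(1-x_0\cdot p_2(y_j)\bigr)$.

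Next I would extract a convergent subsequence. Since $\Omega_2\subset\Sigma_2$ is compact, there is a subsequence (still denoted $y_j$) with $y_j\to y_0\in\Omega_2$. Uniform convergence of $\rho_j$ to $\rho$ together with the continuity of $p_2$ on $\Sigma_2$ gives $\rho_j(x_0)\to \rho(x_0)$ and $p_2(y_j)\to p_2(y_0)$, so
\[
b_j\to b_0:=\rho(x_0)\bigl(1-x_0\cdot p_2(y_0)\bigr).
\]
The key quantitative ingredient is \eqref{eq:1-x p2m>1-kappa}, which gives $1-x\cdot p_2(y)\ge 1-\kappa>0$ uniformly for $x\in\Omega_1$, $y\in\Omega_2$. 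This shows $b_0>0$ (since $\rho(x_0)>0$ by $\rho\in\mathcal F$) and, more importantly, it makes the denominator $1-x\cdot p_2(y_j)$ uniformly bounded below, so the supporting inequality passes to the limit pointwise in $x$ without degeneracy.

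Taking $j\to\infty$ in the inequality and in the equality at $x_0$ then yields
\[
\rho(x)\le \frac{b_0}{1-x\cdot p_2(y_0)}\quad \text{for all $x\in\Omega_1$,}
\]
with equality at $x=x_0$, i.e., $S_I(y_0,b_0)$ supports $S_\rho$ at $x_0$, so $y_0\in\mathcal R_{S_\rho}(x_0)=\mathcal T(\rho)(x_0)$, which is exactly the required continuity. The only delicate point is really the uniform non-degeneracy of $1-x\cdot p_2(y)$, but that is already handled by the Case I bound in \eqref{eq:1-x p2m>1-kappa}; the rest is a routine compactness and limit-passage argument analogous to the last paragraph of the proof of Lemma \ref{lm:refractors are C_S}.
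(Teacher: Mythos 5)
Your proof is correct and follows essentially the same route as the paper: extract a convergent subsequence $y_j\to y_0$ by compactness of $\Omega_2$, use the uniform lower bound $1-x\cdot p_2(y)\ge 1-\kappa>0$ from \eqref{eq:1-x p2m>1-kappa} to keep the denominators non-degenerate, and pass to the limit in the supporting inequality. The only cosmetic difference is that you identify the limit $b_0$ directly from the formula $b_j=\rho_j(x_0)\bigl(1-x_0\cdot p_2(y_j)\bigr)$, whereas the paper first bounds the $b_j$ away from $0$ and $\infty$ and then extracts a convergent subsequence; both are fine.
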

\begin{proof}
Let $\rho_j, \rho\in \mathcal F$ with $\rho_j\to \rho$ uniformly in $\Omega_1$, $x_0\in \Omega_1$ and $m_j\in \mathcal R_{S_{\rho_j}}(x_0)$. Hence $\rho_j(x)\leq \dfrac{b_j}{1-x\cdot p_2(m_j)}$ for all $x\in \Omega_1$ with equality at $x=x_0$.
As in the last part of the proof of Lemma \ref{lm:refractors are C_S}, $b_j$ are bounded away from $0$ and $\infty$.
Therefore there exist subsequences $b_{j_\ell}\to b$ and $m_{j_\ell}\to m$ with 
$\rho(x)\leq \dfrac{b}{1-x\cdot p_2(m)}$ for all $x\in \Omega_1$ with equality at $x=x_0$. Thus $m\in \mathcal R_{S_\rho}(x_0)$ and we are done. 
\end{proof}
As a consequence of Lemma \ref{lm:continuity of T} we obtain from \cite[Lemma 2.3]{gutierrez-huang:nearfieldrefractor} that 
\[
\text{if $\rho_j\to \rho$ uniformly in $\Omega_1$, then $\mathcal M_{S_{\rho_j},f}\to \mathcal M_{S_{\rho},f}$ weakly.}
\]
In addition, properties (A1)-(A3) from \cite[Section 2.1]{gutierrez-huang:nearfieldrefractor} translate to the present case as follows:
\begin{enumerate}
\item[(A1)] if $S_{\rho_1}$ and $S_{\rho_2}$ are refractors from $\Omega_1$ to $\Omega_2$, 
then $S_{\rho_1\wedge \rho_2}$ is a refractor from $\Omega_1$ to $\Omega_2$ with $\rho_1\wedge \rho_2=\min\{\rho_1,\rho_2\}$;
\item[(A2)] if $\rho_1(x_0)\leq \rho_2(x_0)$, then $\mathcal R_{S_{\rho_1}}(x_0)\subset \mathcal R_{S_{\rho_1\wedge \rho_2}}(x_0)$;
\item[(A3)] we let $h_{b,m}(x)=\dfrac{b}{1-x\cdot p_2(m)}$, and we have 
\[
\{h_{b,m}(x):m\in \Omega_2,0<b<\infty\}\subset \mathcal F,
\]
with $\mathcal F$ defined by \eqref{eq:definition of mathcal F}.
In addition we have the following
\begin{enumerate}
\item[(a)] $m\in \mathcal R_{S_{h_{b,m}}}(x)$ for all $x\in \Omega_1$, from Section \ref{sec:Uniformly refracting surfaces}, Case I;
\item[(b)] $h_{b_1,m}\leq h_{b_2,m}$ for $b_1\leq b_2$;
\item[(c)] $h_{b,m}\to 0$ uniformly in $\Omega_1$ as $b\to 0$;
\item[(d)] $h_{b,m}\to h_{b_0,m}$ uniformly in $\Omega_1$ as $b\to b_0$.
\end{enumerate}
\end{enumerate}

We then introduce the following definition.
\begin{definition}
Let $f\in L^1(\Omega_1)$ and let $\mu$ be a Radon measure in $\Omega_2$ with $\int_{\Omega_1}f\,dx=\mu(\Omega_2)$.
The refractor $S$ from $\Omega_1$ to $\Omega_2$ is a weak solution of the refractor problem if 
\[
\mathcal M_{S,f}(E)=\mu(E)
\]
for each Borel set $E\subset \Omega_2$, where $\mathcal M_{S,f}$ is the refractor measure defined by 
\eqref{def:definition of refractor measure}. 
\end{definition}

Using the above set up and the existence results from \cite[Section 2]{gutierrez-huang:nearfieldrefractor}
we obtain the following theorems showing solvability of the refractor problem for anisotropic media when $\kappa<1$.
We first show solvability when the measure $\mu$ is discrete.

\begin{theorem}\label{thm:discrete case}
Let $f\in L^1(\Omega_1)$ with $f>0$ a.e., $m_1,\cdots ,m_N\in \Omega_2$ be distinct points, and $g_1,\cdots ,g_N$ positive numbers satisfying $\int_{\Omega_1}f\,dx=\sum_{i=1}^Ng_i$.

Then for each $0<b_1<\infty$ there exist unique positive $b_2,\cdots ,b_N$ such that 
\[
S=\{\rho(x)x:x\in \Omega_1\}\text{ with } \rho(x)=\min_{1\leq i\leq N}h_{b_i,m_i}(x),
\]
is a weak solution to the refractor problem. In addition, $\mathcal M_{S,f}(\{m_i\})=g_i$ for $1\leq i\leq N$.
\end{theorem}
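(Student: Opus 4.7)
The plan is to apply the abstract discrete existence theorem from \cite[Section 2]{gutierrez-huang:nearfieldrefractor}, since the entire structural setup required by that framework has now been verified in the present section: the class $\mathcal{F}$ of \eqref{eq:definition of mathcal F}, the continuity of $\mathcal{T}$ (Lemma \ref{lm:continuity of T}), the weak convergence $\mathcal{M}_{S_{\rho_j},f}\to\mathcal{M}_{S_\rho,f}$ under uniform convergence of the radial functions, and the structural properties (A1)--(A3) of the family $\{h_{b,m}\}$. Set $\mu=\sum_{i=1}^N g_i\,\delta_{m_i}$, a Radon measure on $\Omega_2$ of total mass $\int_{\Omega_1}f\,dx$. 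For $\mathbf{b}=(b_2,\ldots,b_N)\in(0,\infty)^{N-1}$ define $\rho_\mathbf{b}(x)=\min_{1\leq i\leq N}h_{b_i,m_i}(x)$; iterating (A1) and using (A3), the surface $S_\mathbf{b}=\{\rho_\mathbf{b}(x)x:x\in\Omega_1\}$ is a refractor from $\Omega_1$ to $\Omega_2$.

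Next, I would introduce the admissible set
\[
\mathcal{W}=\{\mathbf{b}\in(0,\infty)^{N-1}:\mathcal{M}_{S_\mathbf{b},f}(\{m_i\})\leq g_i,\ i=2,\ldots,N\},
\]
and establish three facts. First, $\mathcal{W}\neq\emptyset$: by (A3)(d) and compactness of $\Omega_1$, for $b_i$ large enough ($i\geq 2$) we have $h_{b_i,m_i}>h_{b_1,m_1}$ on $\Omega_1$, so $\rho_\mathbf{b}\equiv h_{b_1,m_1}$ and $\mathcal{M}_{S_\mathbf{b},f}(\{m_i\})=0$ for $i\geq 2$. Second, $\mathcal{W}$ is closed in $(0,\infty)^{N-1}$ by the weak continuity of $\mathcal{M}$. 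Third, $\mathcal{W}$ is closed under componentwise minimum, which requires a careful use of (A1) and (A2). Taking the componentwise infimum $\mathbf{b}^*$ of elements of $\mathcal{W}$ then yields an element $\mathbf{b}^*\in\mathcal{W}$.

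The remaining core step is to show equality $\mathcal{M}_{S_{\mathbf{b}^*},f}(\{m_i\})=g_i$ for all $i\geq 2$. I would argue by contradiction: if strict inequality holds at some index $i$, then continuity of the refractor measure in $\mathbf{b}$ (a consequence of (A3)(d) and the weak continuity of $\mathcal{M}$) allows a small decrease of $b_i^*$ while keeping the tuple in $\mathcal{W}$, contradicting minimality. Mass conservation $\sum_{i=1}^N\mathcal{M}_{S_{\mathbf{b}^*},f}(\{m_i\})=\int_{\Omega_1}f\,dx=\sum_i g_i$, which holds because $\mathcal{R}_{S_{\mathbf{b}^*}}$ takes values in $\{m_1,\ldots,m_N\}$ by the very construction of $\rho_{\mathbf{b}^*}$, then forces $\mathcal{M}_{S_{\mathbf{b}^*},f}(\{m_1\})=g_1$. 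Uniqueness follows from strict monotonicity of $\mathcal{M}(\{m_i\})$ with respect to $b_i$, using $f>0$ a.e.\ together with (A2). The main obstacle I expect is verifying that $\mathcal{W}$ is closed under componentwise minimum, together with the strict monotonicity needed for uniqueness: the componentwise minimum of parameter tuples must be shown to produce a refractor whose individual masses at each $m_i$ can only decrease, which is plausible from (A2) but requires care because $\rho_\mathbf{b}$ depends on $\mathbf{b}$ through a pointwise minimum whose active index shifts with $\mathbf{b}$.
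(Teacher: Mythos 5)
Your proposal is correct and takes essentially the same approach as the paper: the paper's proof consists precisely of verifying the initial admissible configuration you describe (choosing $b_i^0$, $i\geq 2$, so large that $\rho_0=h_{b_1,m_1}$ on all of $\Omega_1$, whence $\mathcal M_{S_{\rho_0},f}(\{m_i\})=0\leq g_i$) and then invoking the abstract existence and uniqueness theorems of \cite[Theorems 2.5 and 2.7]{gutierrez-huang:nearfieldrefractor}, whose proof is exactly the admissible-set/componentwise-infimum/perturbation argument you unfold. The only difference is that you re-derive the cited abstract theorem rather than quoting it.
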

\begin{proof}
To prove this theorem, we use \cite[Theorem 2.5]{gutierrez-huang:nearfieldrefractor} with the set up from above, for which we need to verify that the assumptions of that theorem are met.
In fact, we need to show that we can choose positive numbers $b_2^0,\cdots ,b_N^0$ such that $\rho_0(x)=\min_{1\leq i\leq N}h_{b_i^0,m_i}(x)$ such that 
$\mathcal M_{S_{\rho_0},f}(m_i)\leq g_i$ for $2\leq i\leq N$, where $b_1^0=b_1$.
We have $h_{b_1^0,m_1}(x)=\dfrac{b_1^0}{1-x\cdot p_2(m_1)}\leq \dfrac{b_1}{1-\kappa}$ from \eqref{eq:1-x p2m>1-kappa}.
Also $h_{b_i^0,m_i}(x)=\dfrac{b_i^0}{1-x\cdot p_2(m_i)}\geq \dfrac{b_i^0}{1+\kappa}$ for $2\leq i\leq N$
from Lemma \ref{lm:lemma 2.3 CH}(b) and \eqref{eq:kappa<1}.
Therefore choosing $b_2^0,\cdots ,b_N^0$ suitable so that $\rho_0(x)=h_{b_1^0,m_1}(x)$, the assumptions of
 \cite[Theorem 2.5]{gutierrez-huang:nearfieldrefractor} are met and the existence follows.
 The uniqueness follows from \cite[Theorem 2.7]{gutierrez-huang:nearfieldrefractor} since $f>0$ a.e.
\end{proof}

We are now ready to prove the following existence theorem for a general Radon measure $\mu$.

\begin{theorem}\label{thm:existence general measure}
Let $f\in L^1(\Omega_1)$ with $f>0$ a.e, and let $\mu$ be a Radon measure in $\Omega_2$ such that $\int_{\Omega_1}f(x)\,dx=\mu\(\Omega_2\)$.
Then for each $x_0\in \Omega_1$ and $R_0>0$, there exists $\mathcal S$ weak solution to the refractor problem passing through the point $X_0=R_0\,x_0$.
\end{theorem}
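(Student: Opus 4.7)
The plan is the standard approximation-and-compactness strategy that parallels the passage from discrete to general measures in \cite{gutierrez-huang:nearfieldrefractor}. First I would approximate the Radon measure $\mu$ by a sequence of discrete measures $\mu_n = \sum_{i=1}^{N_n} g_i^n\,\delta_{m_i^n}$ supported on $\Omega_2$ that converge to $\mu$ weakly in the sense of measures, with total mass equal to $\int_{\Omega_1} f\,dx$. For each $n$, Theorem \ref{thm:discrete case} produces a discrete refractor
\[
S_n = \{\rho_n(x)\,x : x\in\Omega_1\},\qquad \rho_n(x) = \min_{1\le i\le N_n} h_{b_i^n,m_i^n}(x),
\]
that is a weak solution to the refractor problem with data $(f,\mu_n)$, once one of the heights, say $b_1^n$, is prescribed.

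Next I would exploit the freedom in choosing $b_1^n$ to arrange that each $S_n$ passes through the prescribed point $X_0 = R_0\,x_0$. Concretely, given $x_0 \in \Omega_1$ one may fix an index $i_n$ and a value $b_{i_n}^n$ so that $h_{b_{i_n}^n,m_{i_n}^n}(x_0) = R_0$, and then apply Theorem \ref{thm:discrete case} with $b_1^n = b_{i_n}^n$ playing the role of the prescribed height. The key quantitative input is the two-sided bound
\[
\frac{b}{1+\kappa} \le h_{b,m}(x) \le \frac{b}{1-\kappa}\qquad \text{for all } x\in\Omega_1,\ m\in\Omega_2,
\]
coming from Lemma \ref{lm:lemma 2.3 CH}(a) and \eqref{eq:kappa<1}. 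Together with the constraint $\rho_n(x_0) = R_0$ and the fact that $\mu_n(\Omega_2)$ is fixed, this should force all the $b_i^n$ to lie in a compact subset of $(0,\infty)$, so that the family $\{\rho_n\}$ is uniformly bounded above and below and is uniformly Lipschitz in $\Omega_1$ by Lemma \ref{lm:refractors are Lipschitz continuous} (with a constant independent of $n$).

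With these uniform bounds in hand, the Arzel\`a-Ascoli theorem gives a subsequence $\rho_{n_k}\to \rho$ uniformly on $\Omega_1$, and I would check that the limit $\rho$ parametrizes a refractor $\mathcal S$ from $\Omega_1$ to $\Omega_2$ in the sense of Definition \ref{def:refractor}: uniform limits of minima of supporting surfaces $h_{b_i^n,m_i^n}$ inherit a supporting surface at each point by the compactness of the $(b_i^n,m_i^n)$'s in $[c_1,c_2]\times \Omega_2$, and $\mathcal R_{\mathcal S}(\Omega_1)=\Omega_2$ follows from the weak convergence $\mu_n \rightharpoonup \mu$ combined with Lemma \ref{lm:continuity of T}. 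Since $\rho_n(x_0) = R_0$ for every $n$, the limit passes through $X_0$. Finally, the weak continuity of the refractor measure under uniform convergence of $\rho_n$ (the consequence of Lemma \ref{lm:continuity of T} recorded just after its proof) gives $\mathcal M_{\mathcal S,f} = \lim \mathcal M_{S_n,f} = \lim \mu_n = \mu$, so $\mathcal S$ is the desired weak solution.

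The main obstacle I anticipate is the uniform lower bound on $\rho_n$ (equivalently, on the $b_i^n$): without it one could have $\rho_n \to 0$ on a region and the limit would fail to be a refractor. The normalization $\rho_n(x_0) = R_0$ together with the Lipschitz estimate of Lemma \ref{lm:refractors are Lipschitz continuous} controls $\rho_n$ near $x_0$; propagating this control across all of $\Omega_1$ using that each $\rho_n$ is a minimum of surfaces of the form $h_{b,m}$ with $m\in\Omega_2$ and $b$ bounded below is the delicate point, and is exactly the place where assumption (b) on $\Omega_1,\Omega_2$ (ensuring $1-x\cdot p_2(m)$ is bounded away from zero uniformly) does the essential work.
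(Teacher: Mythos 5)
Your strategy coincides with the paper's: approximate $\mu$ by discrete measures, solve each discrete problem via Theorem \ref{thm:discrete case}, normalize so the approximate solutions pass through $X_0$, and pass to the limit using the two-sided bound on $h_{b,m}$ together with the uniform Lipschitz estimate of Lemma \ref{lm:refractors are Lipschitz continuous} (the paper packages the limit step as an appeal to \cite[Theorem 2.8]{gutierrez-huang:nearfieldrefractor}, whose hypotheses (i) and (ii) are exactly your pinching bound and your Arzel\`a--Ascoli compactness). The one step that does not work as written is the normalization. Choosing $b_{i_n}^n$ so that $h_{b_{i_n}^n,m_{i_n}^n}(x_0)=R_0$ only gives $\rho_n(x_0)=\min_i h_{b_i^n,m_i^n}(x_0)\leq R_0$, since another branch of the minimum may lie strictly below $R_0$ at $x_0$; so the constraint $\rho_n(x_0)=R_0$ that you invoke later is not actually established, and without it the pinching argument yields no uniform lower bound on $\rho_n$ (the values $\rho_n(x_0)$ could drift to $0$) and the limit surface need not pass through $X_0$. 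The correct normalization, used in the paper, is by dilation: if $\rho_n^*$ is the solution from Theorem \ref{thm:discrete case}, then $C_n\,\rho_n^*$ solves the same discrete problem for every constant $C_n>0$ because $\mathcal R_{S_{C_n\rho_n^*}}=\mathcal R_{S_{\rho_n^*}}$ (all heights $b_i^n$ scale by the same factor, so the refractor measure is unchanged), and one simply takes $C_n=R_0/\rho_n^*(x_0)$. Once $\rho_n(x_0)=R_0$ exactly, the ``delicate point'' you flag is resolved by the estimate you already stated: the branch attaining the minimum at $x_0$ has $b_{i_0}^n=R_0\left(1-x_0\cdot p_2(m_{i_0}^n)\right)\leq R_0(1+\kappa)$, giving $\rho_n\leq R_0\,\dfrac{1+\kappa}{1-\kappa}$ on $\Omega_1$, while every other branch satisfies $h_{b_i^n,m_i^n}(x_0)\geq R_0$, hence $b_i^n\geq R_0(1-\kappa)$ and $\rho_n\geq R_0\,\dfrac{1-\kappa}{1+\kappa}$ on all of $\Omega_1$. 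This is precisely hypothesis (i) of \cite[Theorem 2.8]{gutierrez-huang:nearfieldrefractor} that the paper verifies, and with it your compactness and limit arguments go through.
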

\begin{proof}
Let $\mu_\ell=\sum_{i=1}^Ng_i\,\delta_{m_i}$ be a sequence of discrete measures with $\mu_\ell\to \mu$ weakly and $\mu_\ell\(\Omega_2\)=\mu\(\Omega_2\)$ for $\ell=1,2,\cdots $.
From Theorem \ref{thm:discrete case} and for the measure $\mu_\ell$, there exists a refractor $S_{\rho_\ell^*}$ parametrized by $\rho_\ell^*$.
Notice that $S_{C_\ell\, \rho_\ell^*}$ is also a solution to the same refractor problem since $\mathcal R_{C_\ell\, \rho_\ell^*}
=\mathcal R_{\rho_\ell^*}$ for each positive constant $C_\ell$.
Then pick $C_\ell$ so that $C_\ell\,\rho^*_\ell(x_0)=R_0$.
Now we use the existence result \cite[Theorem 2.8]{gutierrez-huang:nearfieldrefractor}, and in order to do that we need to verify that the hypotheses (i) and (ii) of that theorem hold in the present case.
To verify (i) we show that if $R_1\in \text{Range}\(h_{b,m}\)$, then 
\[
R_1\,\dfrac{1-\kappa}{1+\kappa}\leq h_{b,m}\leq R_1\,\dfrac{1+\kappa}{1-\kappa}.
\]
In fact, there exists $x_1\in \Omega_1$ with $R_1=h_{b,m}(x_1)=\dfrac{b}{1-x_1\cdot p_2(m)}$,
so $h_{b,m}(x)=\dfrac{b}{1-x\cdot p_2(m)}=R_1\,\dfrac{1-x_1\cdot p_2(m)}{1-x\cdot p_2(m)}$,
and the desired inequalities follow from Lemma \ref{lm:lemma 2.3 CH}(b) and \eqref{eq:kappa<1}.
The verification of (ii), that is, the family $\{\rho\in \mathcal F:C_0\leq \rho\leq C_1\}$ is compact in $C(\Omega_1)$, follows from Lemma \ref{lm:refractors are Lipschitz continuous} and the proof of Lemma \ref{lm:continuity of T}.
\end{proof}

\begin{remark}\label{rmk:kappa bigger than one}\rm
In the same way we can state and solve the refractor problem when $\kappa>1$, i.e., \eqref{eq:kappa>1} holds, using instead the uniformly refracting surfaces $S_{II}(m,b)$ defined by \eqref{eq:uniformly refracting surface k>1}.
Now the functions $h_{b,m}$ are defined by $h_{b,m}(x)=\dfrac{b}{x\cdot p_2(m)-1}$ and the properties (A1)-(A3) 
defined after Lemma \ref{lm:continuity of T} must be changed in accordance with properties (A1')-(A3') in \cite[Section 2.2]{gutierrez-huang:nearfieldrefractor}. All lemmas in this section then hold true with obvious changes.
For the existence of solutions we now need to use \cite[Theorems 2.9 and 2.11]{gutierrez-huang:nearfieldrefractor}.

In Theorem \ref{thm:existence general measure} uniqueness follows from optimal mass transport, Section \ref{sec:optimal mass transport}.
\end{remark}

\section{Propagation of light in anisotropic materials}\label{sec:propagation light in anistropic materials}
\setcounter{equation}{0}
We begin this section with some background on the propagation of light in anisotropic materials.
Let us assume we have a material whose permittivity and permeability are given by positive definite and symmetric matrices $\epsilon(x,y,z)$ and $\mu(x,y,z)$, respectively.
Assuming we are in the geometric optics regime, i.e., the wave length of the radiation is very small compared with the objects considered, it is known \cite[Chap. III, Sect. 4]{kline-kay:electromagneticwaves} that the function $\psi=\psi(x,y,z)$ defining the wave fronts $\psi(x,y,z)$=constant, satisfies the following first order pde, the Fresnel differential equation:
\begin{equation}\label{eq:Fresnel differential equation of the wave fronts}
\det
\left(
\begin{matrix}
\epsilon & R\\
-R & \mu
\end{matrix}
 \right)=0,
\end{equation}
where $R$ is the $3\times 3$ skew-symmetric matrix
\[
R
=
\left( 
\begin{matrix}
0 & -\psi_z & \psi_y\\
\psi_z & 0 & -\psi_x\\
-\psi_y & \psi_x & 0
\end{matrix}
\right).
\]

We can re write Fresnel's equation in a simpler form using the following Schur's determinant identity:
{\it if $A$ is an $n\times n$ invertible matrix, $B$ is $n\times m$, $C$ is $m\times n$ and $D$ is $m\times m$, then
\[
\det \left(
\begin{matrix}
A & B\\
C & D
\end{matrix}
\right) 
=
\det A\,\det \left(D-C\,A^{-1}\,B \right)
=
\det D\,\det \left(A-B\,D^{-1}\,C \right),
\]
}
of course for the last identity $D$ is invertible.
We then get
\[
\det\left(
\begin{matrix}
\epsilon & R\\
-R & \mu
\end{matrix}
\right)=\det \epsilon \,\det \left( \mu+ R\,\epsilon^{-1}\,R\right) 
=
\det \mu \,\det \left( \epsilon+ R\,\mu^{-1}\,R\right)
\]
and since $\epsilon,\mu$ are positive definite, \eqref{eq:Fresnel differential equation of the wave fronts} is equivalent to either
\begin{equation}\label{eq:Fresnel equation of wave fronts simpler}
\det \left( \mu+ R\,\epsilon^{-1}\,R\right)=0, 
\end{equation}
or
\begin{equation}\label{eq:Fresnel equation of wave fronts simpler bis}
\det \left( \epsilon+ R\,\mu^{-1}\,R\right)=0. 
\end{equation}
Letting
\begin{equation}\label{eq:definition of tau}
\tau=\mu^{-1/2}\epsilon \mu^{-1/2},
\end{equation}
$\tau$ is symmetric and positive definite, so there is an orthogonal matrix $O$ and a diagonal matrix $D$ such that
\[
\tau =ODO^t.
\]
For a column vector $v=\left(\begin{matrix}v_1\\v_2\\v_3\end{matrix}\right)$ define
\[
\text{Skew}(v)=
\left( 
\begin{matrix}
0 & -v_3 & v_2\\
v_3 & 0 & -v_1\\
-v_2 & v_1 & 0
\end{matrix}
\right).
\]
Given a $3\times 3$ matrix $B$ we have the formula 
\begin{align*}
B^t\,\text{Skew}(B\,v)\,B&=\det B\, \text{Skew}(v).
\end{align*}
We then re write \eqref{eq:Fresnel equation of wave fronts simpler bis} as follows:
\begin{align*}
\det \left( \epsilon+ R\,\mu^{-1}\,R\right)
&=
\det \mu \,\det \left(\mu^{-1/2}\epsilon \mu^{-1/2}+\left(\mu^{-1/2}R\mu^{-1/2}\right)  \left(\mu^{-1/2}R\mu^{-1/2}\right)  \right).
\end{align*}
Also since $\mu^{-1/2}$ is symmetric, we have
\begin{align*}
\mu^{-1/2}R\mu^{-1/2}= \mu^{-1/2}\text{Skew}(\nabla \psi)\mu^{-1/2}
&=
\mu^{-1/2}\text{Skew}\left(\mu^{-1/2}\mu^{1/2}\nabla \psi\right)\mu^{-1/2}\\
&=\det \left(\mu^{-1/2} \right)\,\text{Skew}\left(\mu^{1/2}\nabla \psi\right).
\end{align*}
Also
\begin{align*}
&\mu^{-1/2}\epsilon \mu^{-1/2}+
\left(\mu^{-1/2}R\mu^{-1/2}\right)  \left(\mu^{-1/2}R\mu^{-1/2}\right)\\
&=
ODO^t+OO^t\left(\mu^{-1/2}R\mu^{-1/2}\right)OO^t  \left(\mu^{-1/2}R\mu^{-1/2}\right)OO^t\\
&=
O\left(D +O^t\left(\mu^{-1/2}R\mu^{-1/2}\right)OO^t  \left(\mu^{-1/2}R\mu^{-1/2}\right)O\right) O^t.
\end{align*}
We have
\begin{align*}
\bar R:=O^t\left(\mu^{-1/2}R\mu^{-1/2}\right)O
&=
\det \left(\mu^{-1/2} \right)
O^t\,\text{Skew}\left(\mu^{1/2}\nabla \psi\right)\,O\\
&=
\det \left(\mu^{-1/2} \right)
O^t\,\text{Skew}\left(O\,O^t\mu^{1/2}\nabla \psi\right)\,O\\
&=
\det \left(\mu^{-1/2} \right)\,\det O\,
\text{Skew}\left(O^t\mu^{1/2}\nabla \psi\right),
\end{align*}
so
\begin{align*}
D+\bar R \bar R
&=
D+\left(\det \left(\mu^{-1/2} \right) \right)^2\,
\left(\det O\right)^2
\,
\text{Skew}\left(O^t\mu^{1/2}\nabla \psi\right)\,
\text{Skew}\left(O^t\mu^{1/2}\nabla \psi\right)\\
&=
D+\left(\det \left(\mu^{-1/2} \right) \right)^2
\,
\text{Skew}\left(O^t\mu^{1/2}\nabla \psi\right)\,
\text{Skew}\left(O^t\mu^{1/2}\nabla \psi\right)\\
&=
D+
\text{Skew}\left(O^t \dfrac{\mu^{1/2}}{\det \left(\mu^{1/2} \right)} \nabla \psi\right)\,
\text{Skew}\left(O^t\dfrac{\mu^{1/2}}{\det \left(\mu^{1/2} \right)} \nabla \psi\right).
\end{align*}
Therefore 
\begin{align}\label{eq:formula for D plus skew p}
\det \left( \epsilon+ R\,\mu^{-1}\,R\right)
&=
\det \mu \,
\det \left(D+
\text{Skew}\left(O^t \dfrac{\mu^{1/2}}{\det \left(\mu^{1/2} \right)} \nabla S\right)\,
\text{Skew}\left(O^t\dfrac{\mu^{1/2}}{\det \left(\mu^{1/2} \right)} \nabla S\right) \right)\notag\\
&=
\det \mu \,
\det \left(D+\text{Skew}(p)\,\text{Skew}(p)\right)=0,
\end{align}
where
\[
p=\left(\begin{matrix}p_1\\p_2\\p_3 \end{matrix} \right):=O^t \dfrac{\mu^{1/2}}{\det \left(\mu^{1/2} \right)} \nabla S.
\]

Notice that this calculation is done at a fixed point $(x,y,z)$ since the matrices $\epsilon$ and $\mu$ depend on the point $(x,y,z)$; therefore the matrices $D$ and $O$ depend also on $(x,y,z)$.
Next we have
\[
\text{Skew}(p)\,\text{Skew}(p)=p\,p^t-p^t\,p\,Id=p\otimes p -(p\cdot p)\,Id,
\]
so by \eqref{eq:formula for D plus skew p} the Fresnel equation for the wave fronts \eqref{eq:Fresnel equation of wave fronts simpler bis} is then
\[
0
=\det \left(D+\text{Skew}(p)\,\text{Skew}(p)\right)
=\det \left(D+p\otimes p -(p\cdot p)\,Id\right).
\]
To write this equation in a more convenient form,
set 
\[
D
=
\left(
\begin{matrix}
\tau_1 & 0 & 0\\
0 & \tau_2 & 0\\
0 & 0 &\tau_3
\end{matrix} 
\right),
\]
(a matrix depending on $(x,y,z)$),  
so
\begin{align*}
0&=\det \left(D+p\otimes p -(p\cdot p)\,Id\right)\\
&=
\det \left(
\begin{matrix}
\tau_1+p_1^2-|p|^2 & p_1p_2 & p_1p_3\\
p_2p_1 & \tau_2+p_2^2-|p|^2 & p_2p_3\\
p_3p_1 & p_3p_2 & \tau_3+p_3^2-|p|^2\\
\end{matrix} \right)\\
&=
\det \left(
\begin{matrix}
\tau_1-p_2^2-p_3^2 & p_1p_2 & p_1p_3\\
p_2p_1 & \tau_2-p_1^2-p_3^2 & p_2p_3\\
p_3p_1 & p_3p_2 & \tau_3-p_1^2-p_2^2\\
\end{matrix} \right).
\end{align*}
Let us now define for an arbitrary vector $(p_1,p_2,p_3)$ the following functions, {\it which depend on the point $(x,y,z)$ since $\tau_i$ depend on $(x,y,z)$}
\[
\Phi(p_1,p_2,p_3)= \dfrac12 \left(\dfrac{1}{\tau_2} + \dfrac{1}{\tau_3}\right)p_1^2 + \dfrac12\left(\dfrac{1}{\tau_1} + \dfrac{1}{\tau_3}\right)p_2^2+ \dfrac12\left(\dfrac{1}{\tau_1} + \dfrac{1}{\tau_2}\right)p_3^2,
\]
and
\[
\Psi(p_1,p_2,p_3)=(p_1^2+p_2^2+p_3^2)\left( \dfrac{1}{\tau_2\tau_3} p_1^2+\dfrac{1}{\tau_1\tau_3} p_2^2+\dfrac{1}{\tau_1\tau_2} p_3^2 \right).
\]
It is easy to check that
\begin{align}\label{eq:formula for determinant in terms of Phi and Psi}
&\dfrac{1}{\tau_1\tau_2\tau_3}\det\left[
 \begin{matrix}
 \tau_1-p_2^2-p_3^2 & p_1p_2 & p_1p_3\\
p_2p_1 & \tau_2-p_1^2-p_3^2 & p_2p_3\\
p_3p_1 & p_3p_2 & \tau_3-p_1^2-p_2^2\\
   \end{matrix}\right]
   =1 - 2\Phi(p_1,p_2,p_3)+\Psi(p_1,p_2,p_3).
   \end{align}
Now write 
\[
1-2\Phi+\Psi=1-2\Phi +\Phi^2-\Phi^2 +\Psi
=
(1-\Phi)^2-(\Phi^2-\Psi).
\]
Next notice that 
\begin{equation}\label{eq:Phi squared bigger than Psi}
\Phi^2\geq \Psi,
\end{equation}
which follows using Legrange multipliers since $\Phi^2- \Psi$ is homogenous of degree four. 
So we can write
\[
1-2\Phi+\Psi=\left(1-\Phi-\sqrt{\Phi^2-\Psi} \right)\,\left(1-\Phi+\sqrt{\Phi^2-\Psi}\right).
\]
We then obtain that the Fresnel equation of wave fronts \eqref{eq:Fresnel equation of wave fronts simpler bis} can be split as the following two equations
\begin{equation}\label{eq:fresnel equations split}
1-\Phi-\sqrt{\Phi^2-\Psi}=0\qquad \text{or} \qquad 1-\Phi+\sqrt{\Phi^2-\Psi}=0.
\end{equation}
Each of these equations describes a three dimensional surface that depends of the point $(x,y,z)$ chosen at the beginning; see Figure \ref{fig:fresnel surfaces}. That is, in this way each point $(x,y,z)$ in the space has associated a pair of surfaces, one enclosing the other. The inner surface is convex and the outer surface is neither convex nor concave.
We have shown that the vector 
\begin{equation}\label{eq:vector p rotation of gradient S}
p=\left(\begin{matrix}p_1\\p_2\\p_3 \end{matrix} \right):=O^t \dfrac{\mu^{1/2}}{\det \left(\mu^{1/2} \right)} \nabla \psi,
\end{equation}
belongs to one of the surfaces, 
with all quantities calculated at $(x,y,z)$, and the matrix $O$ is orthogonal and diagonalizes the matrix $\tau$. In other words, we have shown that the gradient $\nabla \psi(x,y,z)$ of the wave front $\psi$=constant, when multiplied by the matrix $\dfrac{\mu^{1/2}}{\det \left(\mu^{1/2} \right)}$ and conveniently rotated by $O^t$, belongs to one of the surfaces described by the equations \eqref{eq:fresnel equations split}.

\begin{figure}[h]
\includegraphics[scale=.5, angle=0]{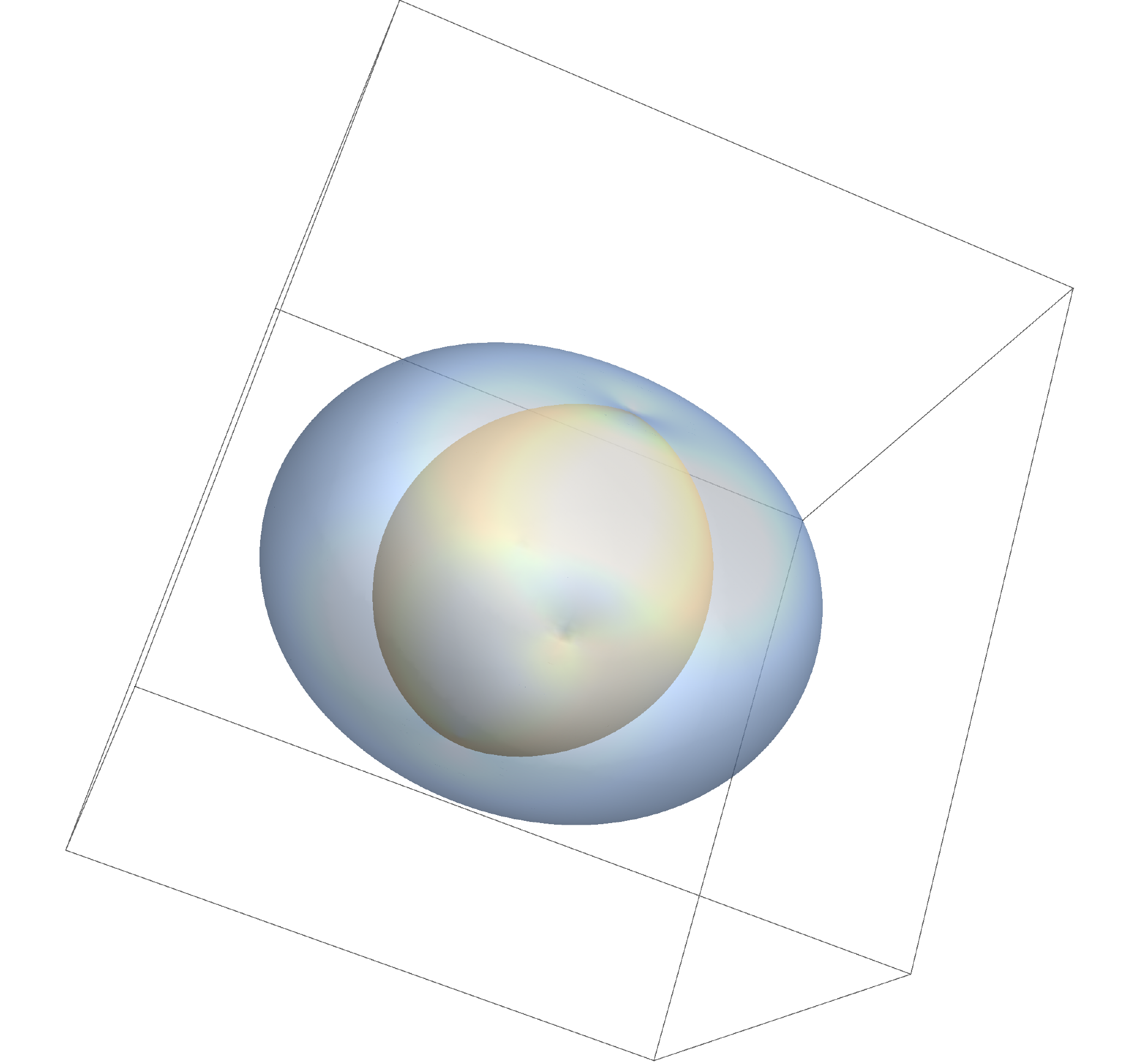}
\caption{Fresnel surfaces when $\tau_1=1,\tau_2=2,\tau_3=3$; $\mu=Id$}
\label{fig:fresnel surfaces}
\end{figure}

Notice that when the permittivity matrix is $\epsilon\,Id$ and the permeability matrix is $\mu\,Id$, where $\epsilon$ and $\mu$ are scalar functions depending only on position, then we recover the eikonal equation $|\nabla \psi|^2=\epsilon \,\mu$.
In fact, in this case the matrix $\tau=(\epsilon/\mu)\,Id$, so $\tau_i=\epsilon/\mu$ for $i=1,2,3$, $O=Id$, 
\[
\Phi(p_1,p_2,p_3)=\dfrac{\mu}{\epsilon}(p_1^2+p_2^2+p_3^2),
\]
and
\[
\Psi(p_1,p_2,p_3)=\left( \dfrac{\mu}{\epsilon}\right)^2(p_1^2+p_2^2+p_3^2)^2.
\]
So $\Phi^2=\Psi$ and both surfaces in \eqref{eq:fresnel equations split}
are identically equal to 
\[
1-\Phi(p_1,p_2,p_3)=1-\dfrac{\mu}{\epsilon}(p_1^2+p_2^2+p_3^2)=0.
\]
So the vector $p$ in \eqref{eq:vector p rotation of gradient S} satisfies the last equation and $p=\dfrac{1}{\mu}\nabla \psi$,  therefore $|\nabla \psi|^2=\epsilon\,\mu$.

\subsection{Case considered for the application of our results}\label{subsec:application of results when mu=a epsilon}
For the application of our results from Sections \ref{sec:vector snell law for anisotropic media}--\ref{sec:kappa less than one} we consider materials having permittivity and permeability tensors $\epsilon$ and $\mu$ that are positive definite symmetric constant matrices with $\mu=a\,\epsilon$, where $a$ is a positive number.  These are homogeneous materials that when $\epsilon$ is not the identity matrix are anisotropic.   
We will associate with such a material a norm as follows.
From the calculations above, the Fresnel equation in this case is as follows. 
From \eqref{eq:definition of tau} we get $\tau=\dfrac{1}{a}\,Id$, so
\[
\Phi(p)=a\,|p|^2,\qquad \Psi(p)=a^2\,|p|^4.
\]
Obviously, $\Phi^2=\Psi$ and so the Fresnel equation is $1-\Phi=0$, i.e., $|p|^2=1/a$ and therefore it has only one sheet. Then from \eqref{eq:vector p rotation of gradient S} the vector 
$
\dfrac{\mu^{1/2}}{\det \mu^{1/2}}\,\nabla \psi
$
satisfies the equation
\[
\left| \dfrac{\sqrt{a}\,\mu^{1/2}}{\det \mu^{1/2}}\,\nabla \psi\right|^2=1.
\]
The last expression induces the following dual norm
\[
N^*(p)=\left| \dfrac{\sqrt{a}\,\mu^{1/2}}{\det \mu^{1/2}}\,p \right|.
\]
The norm $N^*$ is the dual to the norm given by 
\[
N(x)=\sup_{N^*(p)=1}|x\cdot p|=\dfrac{\det \mu^{1/2}}{\sqrt{a}}\,\left|\mu^{-1/2}\,x \right|,
\]
which is the norm we associate to the material.
Notice that if $\mu$ is the identity matrix, then $\epsilon=\dfrac{1}{a}Id$, the material is isotropic and has index of refraction $n=\sqrt{\epsilon\,\mu }=1/\sqrt{a}$. The norm obtained this way is then $N(x)=n\,|x|$, in agreement with the physical explanation for isotropic media given after \eqref{eq:vector snell law}. 

Now, if $N(x)=|A\,x|$ with $A$ a constant matrix, then $\nabla N(x)=\dfrac{1}{N(x)}\,A^tAx$. Therefore, having two materials $I$ and $II$ so that the wave fronts are given by norms $N_1(x)=|A_1x|$ and $N_2(x)=|A_2x|$, respectively, the Snell law \eqref{eq:vector snell law} takes the following form: 
Each incident ray traveling in medium $I$ with direction $x\in \Sigma_1$, i.e., $N_1(x)=1$,
with $x\cdot \nu\geq 0$ and striking the plane $\Pi$ at some point $P_0$ is refracted in medium $II$ into a direction $m\in \Sigma_2$, i.e., $N_2(m)=1$, if 
\begin{equation*}
A_2^tA_2 m-A_1^tA_1x\parallel \nu,
\end{equation*}
where $\nu$ is the unit normal at $P_0$ from medium $I$ to medium $II$.

In our application we have materials $I$ and $II$ having constant tensors $(\epsilon_1,a_1\,\epsilon_1)$ and $(\epsilon_2,a_2\,\epsilon_2)$, respectively, and therefore the associated norms to $I$ and $II$ are
\[
N_1(x)=\dfrac{\det \mu_1^{1/2}}{\sqrt{a_1}}\,\left|\mu_1^{-1/2}\,x \right|,\qquad
N_2(m)=\dfrac{\det \mu_2^{1/2}}{\sqrt{a_2}}\,\left|\mu_2^{-1/2}\,m \right|,
\]
respectively.
If we let 
\[
A_1=\dfrac{\det \mu_1^{1/2}}{\sqrt{a_1}}\,\mu_1^{-1/2},\qquad 
A_2=\dfrac{\det \mu_2^{1/2}}{\sqrt{a_2}}\,\mu_2^{-1/2},
\]
then $N_1(x)=\left|A_1\,x \right|$ and $N_2(m)=\left|A_2\,m \right|$.
To apply the results of the previous sections to this case, from the definition of $\kappa$ in \eqref{eq:kappa<1} we have
\[
\kappa=\sup_{\left|A_1\,x \right|=1}\left|A_2\,x \right|=\sup_{|z|=1}\left| A_2\,A_1^{-1}z \right|
=
\left\|A_2\,A_1^{-1} \right\|
\]
the norm of the matrix $A_2\,A_1^{-1}$ induced by the standard Euclidean norm $|\cdot |$.
\footnote{That is, $\kappa$ is the spectral norm of the matrix $A:=A_2\,A_1^{-1}$, i.e., $$\kappa=\sqrt{\text{maximum eigenvalue of $(A^t\,A)$}}$$.}
Hence when $\left\|A_2\,A_1^{-1} \right\|<1$ the results from Section \ref{sec:kappa less than one} are applicable to this case.
On the other hand, when $\kappa$ is defined by \eqref{eq:kappa>1}, setting $A=A_2A_1^{-1}$ we get 
\[
\sqrt{\text{minimum eigenvalue of $A^tA$}}=\inf_{|z|=1}\left| A_2\,A_1^{-1}z \right|>1,
\]
and the results from Remark \ref{rmk:kappa bigger than one} are applicable in this case.
We can also write
\[
A_2\,A_1^{-1}
=
\dfrac{\det \mu_2^{1/2}}{\sqrt{a_2}}\, \mu_2^{-1/2}\,
\dfrac{\sqrt{a_1}}{\det \mu_1^{1/2}}\, \mu_1^{1/2}
=
\sqrt{\dfrac{a_1}{a_2}}\,\dfrac{\det \mu_2^{1/2}}{\det \mu_1^{1/2}}\,\mu_2^{-1/2}\,\mu_1^{1/2}. \]
Once again notice that if $\mu_i$ is the identity matrix, then $\epsilon_i=\dfrac{1}{a_i}Id$, the materials are isotropic and have index of refraction $n_i=\sqrt{\epsilon_i\,\mu_i}=1/\sqrt{a_i}$. The norms are then $N_i(x)=n_i\,|x|$, $i=1,2$, and $\kappa=\sqrt{a_1/a_2}=n_2/n_1$ in agreement with the physical explanation for isotropic media given after 
\eqref{eq:vector snell law}. 

Finally, we remark that for the materials considered light rays travel in straight lines and they do not exhibit bi refringence, that is, each incident ray is refracted into only one ray.
The last property is because the Fresnel equation has only one sheet. 
That rays travel in straight lines follows from Fermat's principle of least time explained in Section \ref{sec:vector snell law for anisotropic media}. 
Indeed, let $X,Y$ be two points in space, $\gamma(\theta)=(1-\theta)\,X+\theta\,Y$, $0\leq \theta \leq 1$, and let $\phi(\theta)$ be any curve from $X$ to $Y$.
Then the optical length $T$ for each curve satisfies $T(\gamma)=\int_0^1 \|\gamma'(\theta)\|\,d\theta =\|X-Y\|$, and $T(\phi)=\int_0^1 \|\phi'(\theta)\|\,d\theta \geq \left\|\int_0^1 \phi'(\theta)\,d\theta\right\|=\|X-Y\|$.

For general anisotropic materials when $\epsilon$ is not a multiple of $\mu$, the Fresnel equation has two sheets, see Figure \ref{fig:fresnel surfaces}, and as mentioned before bi-refringence occurs.  
This is the case for crystals, that is, when $\epsilon$ is a diagonal constant matrix and $\mu=Id$.


\color{black}

\section{Connection with optimal mass transport}\label{sec:optimal mass transport}

The setting up, analysis, and results from the previous sections allow us to cast the refraction problem in optimal transport terms.
However, the method used in Section \ref{sec:kappa less than one} to prove existence of solutions relies more on a deeper insight of the physical and geometric features of the refractor problem. 

To apply the optimal mass transport approach, we use the abstract set up in \cite[Section 3.2 and 3.3]{gutierrez-huang:farfieldrefractor} 
and from Definition \ref{def:refractor} introduce the cost function
\[
c(x,m)=\log \(\dfrac{1}{1-x\cdot p_2(m)}\)
\]
for $x\in \Omega_1\subset \Sigma_1$, $m\in \Omega_2\subset \Sigma_2$ and $\kappa<1$ with $\kappa$ in \eqref{eq:kappa<1}.
With \cite[Definition 3.9]{gutierrez-huang:farfieldrefractor} of $c$-concavity, we have that 
$\mathcal S=\{\rho(x)x:x\in \Omega_1\}$ is a refractor in the sense of Definition \ref{def:refractor} above if and only if $\log \rho$ is $c$-concave.
From the definition of $c$-normal mapping $\mathcal N_{c,\phi}$ given in \cite[Definition 3.10]{gutierrez-huang:farfieldrefractor}, and the definition of refractor mapping $\mathcal R_S$ given by \eqref{def:refractor map}, we have that $\mathcal R_S=\mathcal N_{c,\log \rho}$.
One can easily check that $\mathcal S$ is a weak solution of the refractor problem if and only if $\log \rho$ is $c$-concave and $\mathcal N_{c,\log \rho}$ is a measure preserving map in the sense of \cite[Equation (3.9)]{gutierrez-huang:farfieldrefractor} from $f(x)\,dx$ to $\mu$.
Hence existence and uniqueness up to dilations of the refractor problem follows as in  
\cite[Theorem 3.15]{gutierrez-huang:farfieldrefractor}.

From Remark \ref{rmk:kappa bigger than one} and using the cost function $c(x,m)=\log (x\cdot p_2(m)-1)$ we obtain similar results for $\kappa>1$, defined by \eqref{eq:kappa>1}.


\end{document}